\renewcommand{\textsc}{\textcolor{red}}
\newtheorem{theorem}{\rm\bf Theorem}[section]
\newtheorem{proposition}[theorem]{\rm\bf Proposition}
\newtheorem{lemma}[theorem]{\rm\bf Lemma}
\newtheorem{corollary}[theorem]{\rm\bf Corollary}
\newtheorem*{theorem 1}{\rm\bf Proposition 1}
\newtheorem*{theorem 2}{\rm\bf Proposition 2}
\theoremstyle{definition}
\theoremstyle{remark}
\def\interieur#1{\mathord{\mathop{\kern 0pt #1}\limits^\circ}}
\title[The ideal triangulation graph]{On the ideal triangulation graph of a punctured surface}
\author{Mustafa Korkmaz}
\address{Mustafa Korkmaz, Department of Mathematics, Middle East Technical University, 06531 Ankara, Turkey.}
\email{korkmaz@metu.edu.tr}
\author{Athanase Papadopoulos}
\address{Athanase Papadopoulos, Max-Plank-Institut f\"ur Mathematik, Vivatsgasse 7, 53111 Bonn,
Germany, and: Institut de Recherche Math{\'e}matique Avanc\'ee,
Universit{\'e} de Strasbourg and CNRS,
7 rue Ren\'e Descartes,
 67084 Strasbourg Cedex, France.} \email{papadopoulos@math.u-strasbg.fr}
\date{\today}
\begin{document}

\begin{abstract}
We study the ideal triangulation graph $T(S)$ of a punctured
surface $S$ of finite type. We show that if $S$ is not the sphere
with at most three punctures or the torus with one puncture, then
the natural map from the extended mapping class group of $S$ into
the simplicial automorphism group of  $T(S)$  is an isomorphism.
We also show that under the same 
conditions on $S$,
 the graph $T(S)$ equipped with its natural simplicial metric is
not Gromov hyperbolic. Thus, from the point of view of Gromov hyperbolicity, the situation of $T(S)$ is different from that of the curve complex of $S$.
\bigskip

\noindent AMS Mathematics Subject Classification: Primary 32G15.
Secondary 20F38 ;  30F10.

\medskip

\noindent Keywords:  mapping class group ; surface ;  arc complex ; ideal triangulation graph ; curve complex ; Gromov hyperbolic.

\end{abstract}
\maketitle

\section{Introduction}
\label{intro}

In this paper, $S$ is a connected orientable surface of finite type, of genus
$g\geq 0$ without boundary and with $n\geq 1$ punctures. We shall
assume that the Euler characteristic $\chi(S)$ of $S$ is negative.
The {\it mapping class group of $S$}, denoted by
$\mathrm{Mod}(S)$, is the group of isotopy classes of
orientation-preserving homeomorphisms of $S$. The {\it extended
mapping class group of $S$}, $\mathrm{Mod}^*(S)$, is the group of
isotopy classes of all homeomorphisms of $S$.

We denote by $\overline{S}$ the surface obtained from $S$ by
filling in the punctures. Thus, $\overline{S}$ is a closed surface
of genus $g$. The punctures can also be considered as
distinguished points on $\overline{S}$, and we denote by $B\subset
\overline{S}$ this set of distinguished points. An {\it arc} in
$S$ (or in $\overline{S}$) is the image in $\overline{S}$ of a
closed interval whose interior is homeomorphically embedded in
$\overline{S}\setminus B$ and whose endpoints are on $B$. An arc
in $S$ (or in $\overline{S}$) is said to be {\it essential} if it
is not homotopic (relative to $B$) to a point in $\overline{S}$.

 All homotopies of
arcs considered in this paper are relative to their endpoints.
An {\it ideal triangulation} (or, for short, a {\it
triangulation}) of $S$ is a maximal collection of disjoint
essential arcs that are pairwise non-homotopic. An
essential arc that belongs to a triangulation will also be called
an {\it edge} of that triangulation. A triangulation will
sometimes be identified with the union of its edges, and will
therefore be considered as a subset of $S$. A {\it face} of a
triangulation $\Delta$ is a connected component of the complement
of $\Delta$ in $S$ (or the closure of such a component), and it
will also be called a {\it triangle}.

An {\it elementary move} is the operation of obtaining a
triangulation from a given one by removing an edge and replacing
it by a distinct edge. If $a$ is the edge that is removed, then we
shall say that the elementary move is performed on $a$.

\begin{figure}[hbp]
\psfrag{a}{$a$}
 \psfrag{b}{$a^*$}
\centering
\includegraphics[width=.4\linewidth]{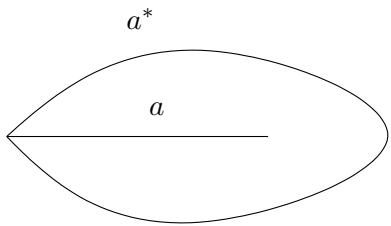}
\caption{\small{The edge $a$ is a non-exchangeable arc in
a triangulation, and $a^*$ is its dual edge.}}
\label{fig:isolated}
\end{figure}

An edge $a$ in a triangulation $\Delta$ is said to be {\it
exchangeable} if one can perform on $\Delta$ an elementary move,
obtaining a new triangulation in which $a$ is replaced by an edge
distinct from $a$. Otherwise, the edge is said to be {\it
non-exchangeable}. It follows from the classification of surfaces
that an edge in a triangulation $\Delta$ is non-exchangeable if
and only if it joins two distinct punctures and if this edge is in
the closure of a unique face of $\Delta$. In this case, there is a
well-defined edge $a^*$ in $\Delta$ which is associated to $a$,
which we call the edge {\it dual to} $a$, and which is the third
edge of the unique triangle of $\Delta$ to which $a$ belongs. This
situation is described in Figure \ref{fig:isolated}.

In this paper, we study the  {\it ideal triangulation graph} of
$S$. This is the simplicial graph $T(S)$ whose vertices are
isotopy classes of triangulations of $S$ in which an edge connects
two vertices whenever these two vertices differ by an elementary
move. The extended mapping class group $\mathrm{Mod}^*(S)$ acts
naturally on $T(S)$ by simplicial automorphisms. We prove the
following.

\begin{theorem}\label{th:automorphism}
Let $S$ be a connected orientable surface with at least one
puncture. If $S$ is not a sphere with at most three punctures or a
torus with one puncture, then the natural homomorphism
$\mathrm{Mod}^*(S)\to \mathrm{Aut}(T(S)$ is an isomorphism.
\end{theorem}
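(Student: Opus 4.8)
The plan is to prove that the natural map is both injective and surjective, the surjectivity being the substantial part. For \textbf{injectivity}, suppose $f\in\mathrm{Mod}^*(S)$ fixes the isotopy class of every triangulation. Choosing two triangulations differing by an elementary move, $f$ must fix the isotopy class of their common edges; iterating over sufficiently many triangulations I would deduce that $f$ fixes the isotopy class of each individual essential arc. Since the edges of a single triangulation already fill $S$, a homeomorphism fixing each of them up to isotopy is isotopic to the identity, so the kernel is trivial. The excluded surfaces are precisely those carrying a nontrivial mapping class that fixes every arc up to isotopy — for instance the elliptic involution of the once-punctured torus — so there injectivity genuinely fails.

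For \textbf{surjectivity}, let $\phi$ be a simplicial automorphism of $T(S)$; the aim is to reconstruct, purely from the graph, the essential arcs of $S$ together with their disjointness relation, i.e. to produce a simplicial automorphism $\alpha$ of the arc complex $\mathcal{A}(S)$ (whose simplices are collections of disjoint, pairwise non-homotopic essential arcs, and whose top simplices are the triangulations). First I would observe that the valence of a vertex $v$ equals the number of exchangeable edges of $\Delta_v$, and that the edges of $T(S)$ at $v$ correspond bijectively to these exchangeable edges, since each exchangeable edge admits a unique flip; thus an edge of $T(S)$ records a codimension-one face of $\mathcal{A}(S)$. Next I would recover the relation ``two flips at $v$ act on disjoint pieces of $\Delta_v$'' from the local structure of $4$-cycles and $5$-cycles of $T(S)$ through $v$: two exchangeable edges whose supporting quadrilaterals are disjoint commute and close up a square, whereas two sharing a triangle close up a pentagon. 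Using the connectivity and simple connectivity of the flip complex (Harer--Hatcher--Penner), these square and pentagon cells let me assemble the higher-codimension faces of $\mathcal{A}(S)$ and, for each essential arc $a$, identify the set $\mathrm{St}(a)$ of triangulations containing $a$ as a combinatorially distinguished subgraph. One then checks that $\phi$ permutes the family $\{\mathrm{St}(a)\}$ and that $\mathrm{St}(a)\cap\mathrm{St}(b)\neq\emptyset$ exactly when $a,b$ are disjoint and non-homotopic, which delivers the desired $\alpha$.

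To \textbf{conclude}, I would invoke (or, if one wants a self-contained argument, reprove by an Ivanov-type construction) the rigidity of the arc complex: for $S$ outside the listed exceptions, every simplicial automorphism of $\mathcal{A}(S)$ is induced by an element of $\mathrm{Mod}^*(S)$. Concretely, $\alpha$ sends the arcs of a fixed triangulation to those of another triangulation, and mapping one to the other extends to a homeomorphism realizing $\alpha$; since $\alpha$ induces $\phi$ on triangulations, $\phi$ itself comes from a mapping class, giving surjectivity.

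The \textbf{main obstacle} is the arc-reconstruction step, and within it the treatment of the non-exchangeable edges. Such an arc $a$ contributes no edge of $T(S)$, so it cannot be read off from valences or flips, and must instead be recovered indirectly through its dual edge $a^*$ (Figure \ref{fig:isolated}) and the fact that $a$ persists in every triangulation reachable without disturbing the unique triangle containing it. Making the description of $\mathrm{St}(a)$ genuinely intrinsic — independent of any prior knowledge of $a$ — and checking that $\phi$ respects it is where essentially all the difficulty lies; the low-complexity surfaces are excluded precisely because there the square/pentagon cell structure degenerates and this reconstruction breaks down.
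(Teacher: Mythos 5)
Your proposal follows essentially the same route as the paper: reducing $\mathrm{Aut}(T(S))$ to $\mathrm{Aut}(A(S))$ by reconstructing arcs from flips, using the square and pentagon cycles (together with Harer's connectivity results) to verify consistency, recovering the non-exchangeable arcs through their dual edges, and concluding via the Irmak--McCarthy rigidity theorem for the arc complex. The paper implements your $\mathrm{St}(a)$-reconstruction concretely as the map $\widetilde{f}(a)=f(\Delta)-f(\Delta_a)$, whose well-definedness is exactly your square/pentagon consistency step, so your outline is correct and matches the published argument.
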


Note that the hypothesis on $S$ made in this theorem, combined
with the condition   $\chi(S)\leq -1$, is equivalent to
$\chi(S)\leq -2$.

The proof of Theorem \ref{th:automorphism} involves the
consideration of the arc complex of $S$.  This is the abstract
simplicial complex, denoted by $A(S)$, whose $k$-simplices, for
each $k\geq 0$, are the collections of $k+1$ distinct isotopy
classes of essential arcs on $S$ which can be represented by
pairwise disjoint arcs on this surface.  The ideal triangulation
graph has been studied in \cite{Harer} and \cite{Hatcher}. The arc
complex has been studied in  \cite{ivanov}, \cite{korkmaz} and
\cite{IM}. The idea of the proof of Theorem \ref{th:automorphism}
follows a general scheme that was used in \cite{IK} and
\cite{Margalit}.

In the proof of Theorem \ref{th:automorphism}, we shall use the
following analogue of that theorem for the arc  complex, which was
obtained by Irmak and McCarthy:

\begin{theorem} $($\cite{IM}$)$ \label{th:automorphism-arc}
Let $S$ be a connected orientable surface with at least one
puncture. If the surface $S$ is not a sphere with at most three
punctures or a torus with one puncture, then the natural
homomorphism $\mathrm{Mod}^*(S)\to \mathrm{Aut}(A(S))$ is an
isomorphism.
\end{theorem}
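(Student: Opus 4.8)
The plan is to follow the Ivanov--Korkmaz scheme used in \cite{IK} and \cite{Margalit} and to establish separately that the natural homomorphism $\lambda\colon \mathrm{Mod}^*(S)\to \mathrm{Aut}(A(S))$ is injective and surjective. Injectivity I would treat first, as the routine half: if a mapping class lies in the kernel, it fixes the isotopy class of every essential arc, in particular the edges of some ideal triangulation $\Delta$ together with a transversal family of arcs crossing them. An Alexander-type argument (cutting $S$ along $\Delta$ into triangles and isotoping the map face by face) then shows it is isotopic to the identity, so $\lambda$ is injective. The excluded surfaces are exactly those where this faithfulness, or the surjectivity below, can fail.

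The substance is surjectivity, so fix a simplicial automorphism $\phi\in\mathrm{Aut}(A(S))$ and aim to produce a homeomorphism inducing it. First I would exploit that $\phi$ preserves the dimension of simplices. Every maximal simplex of $A(S)$ is an ideal triangulation, and all ideal triangulations have the same number $6g-6+3n$ of edges; hence the maximal simplices are precisely the top-dimensional ones, and $\phi$ permutes the set of isotopy classes of ideal triangulations. Next I would recover the combinatorics of elementary moves intrinsically: a codimension-one face $\Delta\setminus\{a\}$ of a maximal simplex lies in exactly two maximal simplices when $a$ is exchangeable and in a unique one when $a$ is non-exchangeable. Thus $\phi$ respects the edge relation of the ideal triangulation graph $T(S)$, sends exchangeable edges to exchangeable edges and non-exchangeable edges to non-exchangeable ones, and, by reading off the unique third edge of the relevant triangle, preserves the dual-edge relation $a\mapsto a^{*}$.

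With this dictionary in place I would rigidify $\phi$. Choosing a base triangulation $\Delta_0$, the image $\phi(\Delta_0)$ is again a triangulation, and by the change-of-coordinates principle there is a homeomorphism $h$ of $S$ carrying $\Delta_0$ to $\phi(\Delta_0)$ edge by edge, compatibly with the incidence data that $\phi$ preserves. Replacing $\phi$ by $\lambda(h)^{-1}\circ\phi$, I may assume $\phi$ fixes $\Delta_0$ and each of its edges, and it then remains to show that such a $\phi$ is the identity. Here I would propagate along elementary moves: at each flip of an exchangeable edge $a$ in a fixed triangulation, the image of the flipped edge is forced, since it is the unique arc disjoint from $\Delta\setminus\{a\}$ and distinct from $a$. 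Because $T(S)$ is connected and every essential arc occurs as an edge of some triangulation, it follows that $\phi$ fixes every vertex of $A(S)$, hence $\phi=\lambda(h)$, giving surjectivity.

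The main obstacle, and the step I expect to require the most care, is the combinatorial reconstruction underlying the construction of $h$: I must show that $\phi$ preserves not merely disjointness but the finer incidence data — which arcs share a puncture, how the triangles assemble around each puncture, and the relevant orientation-type information — so that the edge-by-edge correspondence $\Delta_0\to\phi(\Delta_0)$ is genuinely realized by a homeomorphism rather than only by an abstract combinatorial isomorphism. This is precisely where the hypothesis excluding the sphere with at most three punctures and the once-punctured torus enters: for those surfaces $A(S)$ is too small or too symmetric (for instance the once-punctured torus carries an involution acting trivially on arcs, and the low-complexity spheres have degenerate links), so the reconstruction, and with it the faithfulness of $\lambda$, breaks down.
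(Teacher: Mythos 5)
A preliminary point: the paper contains no proof of this statement. Theorem \ref{th:automorphism-arc} is imported verbatim from Irmak--McCarthy \cite{IM} and used as a black box in the proof of Theorem \ref{th:automorphism}, so your sketch cannot be compared with an internal argument; it has to be judged against what \cite{IM} itself must establish. On that standard, your architecture is the familiar rigidity scheme and its peripheral steps are sound: injectivity via the Alexander method (cutting along a triangulation fixed edgewise), the identification of maximal simplices of $A(S)$ with ideal triangulations since all have $6g-6+3n$ edges, the dichotomy that a codimension-one face lies in exactly two maximal simplices or in exactly one according as the missing edge is exchangeable or not, and the flip-propagation argument --- once $\phi$ fixes a triangulation $\Delta_0$ vertexwise, the flipped edge is the unique second completion of $\Delta_0\setminus\{a\}$, so injectivity of $\phi$ forces its image, and Harer's connectivity of $T(S)$ then fixes every arc. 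That reduction is correct.

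The genuine gap is exactly the step you flag as ``the main obstacle'' and then dispose of by appeal to the change-of-coordinates principle: the existence of a homeomorphism $h$ carrying $\Delta_0$ to $\phi(\Delta_0)$ edge by edge in the pattern prescribed by $\phi$. Change of coordinates gives no such thing, because two ideal triangulations of the same surface need not be combinatorially isomorphic at all: they can differ in how many edges are loops based at a single puncture, how many faces are triangles with two sides identified (the non-exchangeable configuration of Figure \ref{fig:isolated}), and which triples of edges cobound a face. So before any $h$ exists one must prove, from the simplicial structure of $A(S)$ alone, that $\phi$ preserves all of this finer incidence data --- which arcs share a puncture, which pairs or triples of arcs bound a common triangle, the dual-edge relation $a\mapsto a^*$, and the gluing pattern of faces around each puncture. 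Those preservation lemmas are the actual mathematical content of \cite{IM} (whose result is in fact stronger, treating injective simplicial self-maps rather than only automorphisms, so that maximality and codimension counts cannot even be taken for granted there), and your sketch asserts them rather than proving them. In short, you have correctly reduced the theorem to its hard local-reconstruction lemma, but the lemma itself --- the heart of the Irmak--McCarthy proof --- is left open, so the proposal is an outline of the known strategy rather than a proof.
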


The graph $T(S)$ is the one-skeleton of the simplicial complex
dual to the arc complex $A(S)$. Thus, any automorphism of $A(S)$
induces an automorphism of the graph $T(S)$. But since $T(S)$ is a
strict subcomplex of the dual complex of $A(S)$, its automorphism group
could a priori be larger than the automorphism group of $A(S)$.
Theorem~\ref{th:automorphism} shows that this is not the case.

There are three special cases of surfaces that admit ideal
triangulations and that are excluded by the hypothesis of Theorem
\ref{th:automorphism}: the cases of a sphere with two or three
punctures and the case of a torus with one puncture. Let us
briefly discuss these cases.

If $S$ is a sphere with two punctures, then $T(S)$ consists of
only one vertex, hence its automorphism group is trivial. But the
extended mapping class group $\mathrm{Mod}^*(S)$ is isomorphic to
Klein's four group, the direct sum of two cyclic groups of order
two. Thus, the
natural homomorphism $\mathrm{Mod}^*(S)\to \mathrm{Aut}(T(S))$ is
surjective and not injective.

If $S$ is a sphere with three punctures, then any ideal
triangulation of $S$ has three vertices, three edges and two
faces. The simplicial complex $T(S)$ is a finite graph,
homeomorphic to a tripod. The center of the tripod corresponds to
the ideal triangulation of $S$ in which every edge is
exchangeable, and the three other vertices of $T(S)$ correspond to
ideal triangulations in which exactly one edge is exchangeable.
The automorphism group $\mathrm{Aut}(T(S))$ of $T(S)$ is
isomorphic to the permutation group on three elements (the three
vertices of valency one of the tripod). The mapping class group of
$S$ is also isomorphic to the permutation group on three element
(the three punctures of $S$), and the natural homomorphism
$\mathrm{Mod}(S)\to \mathrm{Aut}(T(S))$ is an isomorphism. The
natural homomorphism $\mathrm{Mod}^*(S)\to \mathrm{Aut}(T(S))$ is
surjective and its kernel is the center of $\mathrm{Mod}^*(S)$,
which is a cyclic group of order two.

If $S$ is a torus with one puncture, then the ideal triangulation
graph $T(S)$ is a regular infinite tree in which every vertex has
valency $3$. The automorphism group of such a tree is uncountable.
To see this, we consider the set of one-sided infinite sequences
of letters on the alphabet $\{f,n\}$. This set of sequences is
uncountable, and it can be injected in the simplicial automorphism
group of the tree. Such an injection can be done by choosing a
base point of the tree, then embedding the set of one-sides
sequences as the set of simplicial geodesic rays starting at that
base point, and finally defining for each such geodesic ray an
automorphisms of the tree by interpreting the letter $f$ as a
flip, and the letter $n$ as no flip. Thus, in the case considered,
the natural homomorphism $\mathrm{Mod}^*(S)\to \mathrm{Aut}(T(S))$
is highly non-surjective since the extended mapping class group
$\mathrm{Mod}^*(S)$ is countable.

By declaring that the length of each edge is one, we may
consider $T(S)$ as a metric space. With this metric, the
\textit{length} of a simplicial path between two vertices is the
number of edges in that path.
It is now natural to ask
whether $T(S)$ is Gromov-hyperbolic.

Theorem  \ref{th:automorphism} implies the following:

\begin{theorem}\label{not-hyp}
Let $S$ be a connected orientable surface with at least one
puncture. Suppose that $S$ is not a sphere with at most three
punctures or a torus with one puncture. Then the ideal
triangulation graph of $S$ is not Gromov-hyperbolic.
\end{theorem}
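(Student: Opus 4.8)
The plan is to establish non-hyperbolicity by the standard obstruction: I would produce a quasi-isometrically embedded Euclidean flat, that is, a quasi-isometric embedding of $\mathbb{Z}^{2}$, inside $T(S)$. Since a Gromov-hyperbolic geodesic space admits no quasi-isometrically embedded copy of $\mathbb{R}^{2}$ (such a copy would force arbitrarily fat quasi-geodesic quadrilaterals, violating thinness), the existence of such a flat is enough. The whole problem is therefore transferred to building this flat in the flip graph, and this is where Theorem~\ref{th:automorphism} is used: it guarantees that the commuting symmetries producing the flat are honest simplicial automorphisms of $T(S)$, so the flat is genuinely preserved by a $\mathbb{Z}^{2}$ acting on $T(S)$.

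First I would record the geometric features of the natural action. The group $\mathrm{Mod}^{*}(S)$ acts on $T(S)$ by simplicial automorphisms, and this action is cocompact because there are only finitely many combinatorial types of ideal triangulation, each with $6g-6+3n$ edges; it is also proper, since a triangulation $\Delta$ fills $S$, so cutting along $\Delta$ yields disks and any mapping class fixing every edge of $\Delta$ is trivial, whence the stabiliser of $\Delta$ injects into the finite automorphism group of the triangulated surface. As $T(S)$ is a connected, locally finite graph (every vertex has valence at most $6g-6+3n$), the Milnor--Svarc lemma yields a quasi-isometry $T(S)\simeq \mathrm{Mod}^{*}(S)$ for any word metric, and Theorem~\ref{th:automorphism} identifies this geometric action with that of the full automorphism group $\mathrm{Aut}(T(S))$.

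Next, using $\chi(S)\le -2$, I would choose two disjoint, non-isotopic essential simple closed curves $\alpha,\beta$ on $S$; the Dehn twists $T_{\alpha},T_{\beta}$ commute and generate a subgroup isomorphic to $\mathbb{Z}^{2}$. The key input is that this subgroup is undistorted in $\mathrm{Mod}^{*}(S)$, so that under the quasi-isometry above the orbit $(m,n)\mapsto T_{\alpha}^{m}T_{\beta}^{n}\cdot\Delta_{0}$ is a quasi-isometric embedding of $\mathbb{Z}^{2}$ into $T(S)$, giving the desired flat and hence the contradiction. Equivalently, and without invoking Milnor--Svarc, one argues directly on the boundary: if $T(S)$ were hyperbolic, then since the action is cocompact the infinite-order automorphisms $T_{\alpha},T_{\beta}$ are loxodromic, and two commuting loxodromic isometries must share their pair of fixed points at infinity, forcing $\langle T_{\alpha},T_{\beta}\rangle$ to be virtually cyclic, which contradicts $\langle T_{\alpha},T_{\beta}\rangle\cong\mathbb{Z}^{2}$.

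I expect the main obstacle to be the undistortion of the flat, i.e. the lower bound $d_{T(S)}(\Delta_{0},\,T_{\alpha}^{m}T_{\beta}^{n}\Delta_{0})\gtrsim |m|+|n|$ with the two twist directions contributing independently. Through Milnor--Svarc this reduces to the classical undistortion of Dehn-twist flats in the mapping class group; a direct proof would instead bound flip distance from below by the geometric intersection numbers of a fixed triangulation with curves dual to $\alpha$ and $\beta$, and then check that the two directions cannot be absorbed by a shared bounded family of flips. A genuinely delicate point is the boundary case of surfaces carrying no two disjoint non-isotopic essential curves, namely the four-punctured sphere, which falls outside this scheme and would require separate analysis.
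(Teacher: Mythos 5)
Your route is essentially the paper's own. The paper's entire proof is your opening reduction: the action of $\mathrm{Mod}^*(S)$ on $T(S)$ is cocompact (finitely many combinatorial types of ideal triangulations) and metrically proper, so if $T(S)$ were hyperbolic then $\mathrm{Mod}^*(S)$ would be a word-hyperbolic group, which it is not because it contains free abelian subgroups of rank two. (On the properness point you are in fact more careful than the paper, which claims the action is \emph{free}; that is not quite right, since a symmetric triangulation can have a nontrivial finite stabilizer, but finite stabilizers are all that Svarc--Milnor needs.) Two of your elaborations are superfluous. First, Theorem \ref{th:automorphism} plays no role: mapping classes act on $T(S)$ simplicially by definition, and despite the paper's phrase ``Theorem \ref{th:automorphism} implies the following,'' its proof never invokes that theorem. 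Second, the undistortion of the twist flat, which you single out as the main obstacle, is not needed at all: a word-hyperbolic group contains no $\mathbb{Z}^2$ subgroup whatsoever, distorted or not, since the centralizer of an infinite-order element is virtually cyclic --- this is exactly your second, boundary-type variant, and it is the argument to keep; the quasi-isometrically embedded flat can be discarded.

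Your closing worry about the four-punctured sphere, however, is a genuine gap --- and it is a gap in the paper's proof just as much as in your flat construction. The sphere with four punctures satisfies the theorem's hypotheses ($\chi=-2$) yet carries no pair of disjoint non-isotopic essential simple closed curves, and its extended mapping class group is virtually free (the subgroup generated by two Dehn twists is free of rank two and of finite index in the pure mapping class group), hence word-hyperbolic and containing no $\mathbb{Z}^2$: the paper's final sentence is simply false for this surface. Worse, the very Svarc--Milnor equivalence on which both arguments rest then shows that the triangulation graph of the four-punctured sphere is quasi-isometric to a virtually free group and therefore \emph{is} Gromov hyperbolic; so this case does not merely ``require separate analysis'' as you suggest --- it is a counterexample to the statement as formulated here, and must be added to the list of excluded surfaces. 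For every other surface allowed by the hypotheses one has $3g-3+n\geq 2$, two disjoint non-isotopic essential curves exist, their twists generate a $\mathbb{Z}^2$, and both your argument and the paper's go through without further change.
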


\begin{proof}
The action of the extended mapping class group $\mathrm{Mod}^*(S)$
on the vertices of $T(S)$ is free, since a  homeomorphism of $S$
which fixes the homotopy class of an ideal triangulation is
homotopic to the identity. The action of $\mathrm{Mod}^*(S)$ on
$T(S)$ is co-compact, since up to homeomorphisms,  there are only
finitely homotopy classes of ideal triangulations on any surface
of finite type. This is because the surface $S$ equipped with any ideal triangulation is
obtained by gluing $4g-4+2n$ triangles along edges, 
and there are only finitely many
ways to glue such a finite set of triangles to get a triangulated surface. Thus, if $T(S)$ were
Gromov-hyperbolic, then $\mathrm{Mod}^*(S)$ would be
word-hyperbolic (see \cite{Gromov}). But under the hypotheses of
the theorem, the extended mapping class group $\mathrm{Mod}^*(S)$
is not word-hyperbolic, since it contains free abelian groups of
rank $\geq 2$.
 \end{proof}

We note that in  the cases excluded in the hypothesis of Corollary
\ref{not-hyp}, the situations are as follows:

  \begin{enumerate}[$\bullet$]
  \item If $S$ is a sphere with one puncture, then the triangulation
graph $T(S)$ is empty. 
\item If $S$ is a sphere with two punctures, then
$T(S)$ consists of only one vertex, hence it is hyperbolic.

\item    If $S$ is a sphere with three punctures, then
its triangulation graph is compact, and therefore hyperbolic.

\item  If $S$ is a torus with one puncture, then
its triangulation graph is hyperbolic since, as we recalled above,
it is a tree.
\end{enumerate}

  \medskip

Theorem \ref{not-hyp} says that the large-scale geometry of the
triangulation graph of $S$ is different from that of the curve
complex, which, by a result of Masur and Minsky, is hyperbolic.

\section{Squares and pentagons in $T(S)$}

To simplify notation, we shall often identify an arc or a
triangulation on $S$ with its isotopy class.

We shall use two special classes of simplicial closed paths in
$T(S)$, and we now describe them.

A {\it square} in $T(S)$ is a simple closed (that is, injective)
path in this graph consisting of four edges,  as represented in
Figure \ref{fig:square}. In this figure, elementary moves are
performed on exchangeable edges $a$ and $c$, whose images under
these moves are denoted, respectively, by $a'$ and $c'$. Note that
the existence of such an elementary move implies that the
interiors of the faces containing $a$ and $c$ in the surface $S$
are disjoint. The move is also represented diagrammatically as
follows:

\[\langle a,c\rangle \leftrightarrow\langle a,c'\rangle \leftrightarrow
\langle a',c'\rangle \leftrightarrow\langle a',c\rangle \leftrightarrow
\langle a,c\rangle .
\]

In this notation, a symbol such as $\langle a,c\rangle
\leftrightarrow\langle a,c'\rangle$ represents an elementary move
between two triangulations, the first one having $a$ and $c$ among
its edges and the second one having $a$ and $c'$ among its edges,
and where the move is performed on $c$, which is transformed into
$c'$. All other edges remain unchanged.

\begin{figure}[hbp]
 \psfrag{a}{$a$}
\psfrag{b}{$a'$}
\psfrag{c}{$c$}
\psfrag{d}{$c'$}
\centering
\includegraphics[width=.5\linewidth]{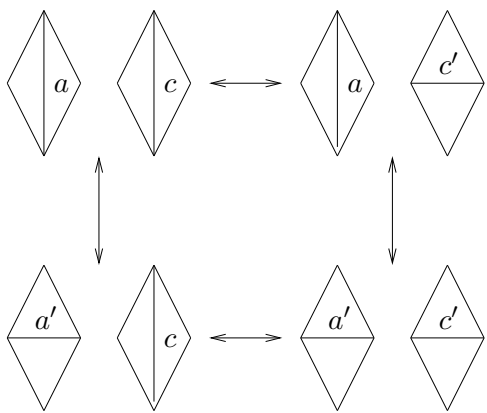}
\caption{\small{A square in the triangulation graph.}}
\label{fig:square}
\end{figure}

A {\it pentagon} in $T(S)$ is a simple closed path in $T(S)$ of
length five, represented diagrammatically in Figure
\ref{fig:pentagon}. In that figure, at each vertex, we have
indicated the name of a pair of edges, one which remains invariant
after a move represented by an edge adjacent to that vertex, and
one which is transformed by that move into an edge whose name
appears in the label of the corresponding adjacent vertex.  This
move is also represented diagrammatically as follows:

\[\langle a,b\rangle \leftrightarrow\langle a,e\rangle
\leftrightarrow\langle e,d\rangle \leftrightarrow\langle d,c\rangle
\leftrightarrow\langle c,b\rangle \leftrightarrow\langle b,a\rangle.
\]

\begin{figure}[hbp]
   \psfrag{A}{$\langle a,b\rangle $}
\psfrag{B}{$\langle a,e\rangle $}
\psfrag{C}{$\langle e,d\rangle $}
\psfrag{D}{$\langle d,c\rangle $}
\psfrag{E}{$\langle c,b\rangle $}
\centering
\includegraphics[width=.3\linewidth]{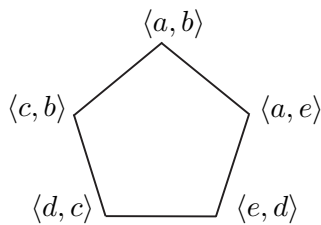}
\caption{\small{A pentagon in the triangulation graph.}}
\label{fig:pentagon}
\end{figure}

 \begin{lemma}\label{lem:triangle}
 There are no closed paths of length three in $T(S)$.
 \end{lemma}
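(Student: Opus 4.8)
The plan is to show that if we perform an elementary move on a triangulation and then a second elementary move, we can never return to the original triangulation in exactly three steps. The key observation is that each elementary move changes exactly one edge of the triangulation: it removes a single exchangeable edge and replaces it by a distinct edge, leaving all other edges fixed. So I would track, along a hypothetical closed path of length three, the symmetric difference between the successive edge-sets of the triangulations, which always has size exactly two (the removed edge and its replacement).

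First I would set up notation. Suppose for contradiction that $\Delta_0 \leftrightarrow \Delta_1 \leftrightarrow \Delta_2 \leftrightarrow \Delta_0$ is a closed path of length three, where consecutive triangulations differ by an elementary move. Write $\Delta_0$ and $\Delta_1$ as sharing all edges except one: the move $\Delta_0 \leftrightarrow \Delta_1$ removes an edge $a$ from $\Delta_0$ and inserts an edge $a'$, so $\Delta_1 = (\Delta_0 \setminus \{a\}) \cup \{a'\}$ with $a \neq a'$. Similarly the move $\Delta_1 \leftrightarrow \Delta_2$ replaces some edge $b$ of $\Delta_1$ by an edge $b'$, and the closing move $\Delta_2 \leftrightarrow \Delta_0$ replaces some edge $c$ of $\Delta_2$ by $c'$, where in each case the new edge is genuinely distinct from the old one.

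The main step is a counting/parity argument. Since $\Delta_0$ and $\Delta_1$ differ in exactly one edge, and likewise for the other two pairs, I would argue that after three elementary moves the net change in the edge-set cannot be trivial. Concretely, consider the edge $a'$ that was introduced in the first move but is absent from $\Delta_0$. To return to $\Delta_0$, this edge $a'$ must eventually be removed, which can only happen in the second or third move. One then checks each case: if $a'$ is removed in the second move (so $b = a'$), then after two moves we are back to a triangulation differing from $\Delta_0$ in at most the single edge pair $\{a, b'\}$, and a single move cannot both restore $a$ and eliminate the discrepancy unless the path is degenerate (collapsing to length one or two, contradicting that it is a genuine simple closed path of length three). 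The remaining case, where $a'$ survives into $\Delta_2$ and must be removed by the closing move, forces $c = a'$ and $c' = a$, but then the middle move $\Delta_1 \leftrightarrow \Delta_2$ would have to be the identity on edges, which is impossible for an elementary move. The hard part will be organizing these cases cleanly and ruling out the degenerate possibilities, i.e.\ confirming that no assignment of the removed/inserted edges across the three moves yields $\Delta_2 = \Delta_0$ while keeping all three moves nondegenerate; this is essentially a finite bookkeeping check on how a size-two symmetric difference can evolve, and the upshot is that an odd number of single-edge swaps can never return to the starting configuration.

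Finally I would note that this argument is purely combinatorial and uses only the defining property of an elementary move, so it applies uniformly to every surface $S$ under consideration. I would phrase the conclusion as: since returning to $\Delta_0$ after three single-edge exchanges is impossible, there is no closed path of length three in $T(S)$, completing the proof.
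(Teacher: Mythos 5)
There is a genuine gap: your argument is purely set-theoretic, but at that level the statement is simply false, so no amount of bookkeeping can close the proof. Three single-edge swaps can perfectly well return to the starting set: writing $E=\Delta_0\setminus\{a\}$, the cycle $E\cup\{a\}\leftrightarrow E\cup\{a'\}\leftrightarrow E\cup\{b'\}\leftrightarrow E\cup\{a\}$ consists of three nondegenerate swaps (remove $a$, add $a'$; remove $a'$, add $b'$; remove $b'$, add $a$) with $a,a',b'$ pairwise distinct, and all three sets are distinct. This is exactly your first case ($b=a'$), where you assert that ``a single move cannot both restore $a$ and eliminate the discrepancy unless the path is degenerate'' --- but it can: the closing swap removes $b'$ and inserts $a$ in one step, disposing of both elements of the symmetric difference $\{a,b'\}$ simultaneously, with nothing degenerate. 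Your concluding parity slogan (``an odd number of single-edge swaps can never return to the starting configuration'') is false for the same reason. Your second case is also resolved incorrectly: if $a'$ survives into $\Delta_2$, the closing move must indeed remove $a'$, but the edge it re-inserts need not be $a$; it can be an edge $e\in E$, in which case $\Delta_2=(E\setminus\{e\})\cup\{a,a'\}$ and the middle move is ``remove $e$, add $a$'' --- again combinatorially consistent and certainly not the identity.

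What is missing is the geometric input that the paper's (very terse) proof invokes when it concludes $a=b$ from the existence of two moves out of the same vertex: if an elementary move on a triangulation $\Delta$ replaces an edge $x$ by a new arc $c$, then $c$ is disjoint from every edge of $\Delta$ except $x$, which it must cross (by maximality of triangulations, since otherwise $\Delta\cup\{c\}$ would be a larger arc system). Hence two elementary moves out of the same triangulation introducing the same new arc must remove the same edge; equivalently, the codimension-one system $\Delta_0\setminus\{a\}$ admits at most two completions to a triangulation (the two diagonals of the quadrilateral left by deleting $a$), and any two distinct completions intersect essentially. With this fact both of your problematic configurations die instantly: in the first, $a,a',b'$ would be three distinct arcs completing the same system $E$; in the second, $\Delta_2$ would contain both diagonals $a$ and $a'$, which cross. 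The skeleton of your case division could be salvaged, but only after proving this uniqueness/intersection property of flips --- it, and not parity, is the actual content of the lemma.
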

  \begin{proof}
 Assume there exists such a path and let us represent it by a
 diagram
 \[\langle a,b\rangle \leftrightarrow\langle b,c\rangle \leftrightarrow
 \langle c,a\rangle \leftrightarrow\langle a,b\rangle
 \]
in which $a$ and $c$ are distinct arcs and where the double arrows
represent, as before, elementary moves, that is, the arc $a$ is
transformed into the arc $c$ by the first move, and so on. But the
existence of the moves $\langle a,b\rangle \leftrightarrow\langle
b,c\rangle $ and  $\langle a,b\rangle \leftrightarrow\langle
c,a\rangle $ imply $a=b$, a contradiction. This proves the lemma.
  \end{proof}

 \begin{lemma}\label{lem:square}
Every simple closed path of length four in $T(S)$ is a square
(that is, it is of the form described in Figure
 \ref{fig:square}).
 \end{lemma}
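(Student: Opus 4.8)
The plan is to track, around the closed path, how the edge set of each successive triangulation differs from the starting one, and to combine this bookkeeping with the absence of length-three cycles (Lemma \ref{lem:triangle}) and with the elementary fact that two edges related by a single elementary move must intersect, hence cannot both lie in one triangulation. First I would fix an orientation and write the path as $\Delta_0 \to \Delta_1 \to \Delta_2 \to \Delta_3 \to \Delta_0$, each arrow deleting one edge and inserting another; after relabelling, the first move replaces an edge $a$ by an edge $a'$. For each $i$ set $D_i = \Delta_0 \triangle \Delta_i$, the symmetric difference of edge sets. Since one move changes the membership of exactly two edges, $|D_{i+1}|$ differs from $|D_i|$ by $-2$, $0$, or $2$; moreover $|D_0| = 0$ and $|D_1| = 2$ with $D_1 = \{a, a'\}$. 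Because a flip is an involution, the second move cannot be performed on $a'$ (that would give $\Delta_2 = \Delta_0$, violating simplicity), so it has the form $c \to c'$ with $c$ an edge common to $\Delta_0$ and $\Delta_1$ and $c \neq a$.

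The heart of the argument is then a short counting step. Simplicity of the path gives $|D_2| \neq 0$ and $|D_3| \neq 0$, while $|D_4| = |D_0| = 0$; together with the step constraints above this forces $|D_3| = 2$ and $|D_2| \in \{2, 4\}$. If $|D_2| = 2$, one checks that the only possibility is $c' = a$, so that $\Delta_2 = (\Delta_0 \setminus \{c\}) \cup \{a'\}$ is itself adjacent to $\Delta_0$; then $\Delta_0, \Delta_1, \Delta_2$ would form a closed path of length three, contradicting Lemma \ref{lem:triangle}. Hence $|D_2| = 4$ and $D_2 = \{a, a', c, c'\}$ consists of four distinct edges, with $\Delta_2 = (\Delta_0 \setminus \{a,c\}) \cup \{a',c'\}$.

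With this in hand the last two moves are essentially forced. Since $|D_2| = 4$, $|D_3| = 2$, and $|D_4| = 0$, the third move must delete an edge of $\{a', c'\}$ and insert an edge of $\{a, c\}$, and I would simply run through the four cases. Inserting $c$ in place of $a'$, or $a$ in place of $c'$, would put the two move-related edges $c$ and $c'$ (respectively $a$ and $a'$) simultaneously into $\Delta_3$, which is impossible since such a pair of edges must cross. Replacing $c'$ by $c$ would give $\Delta_3 = \Delta_1$, against simplicity. Thus the third move is $a' \to a$, which forces the fourth to be $c' \to c$, and the path reads $\langle a, c\rangle \to \langle a', c\rangle \to \langle a', c'\rangle \to \langle a, c'\rangle \to \langle a, c\rangle$, a square. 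Finally, because flipping $a'$ inside $\Delta_2$ returns $a$, the two triangles meeting $a'$ are unchanged by the move on $c$, so $c$ is not a side of either of them and the faces carrying $a$ and those carrying $c$ have disjoint interiors, exactly as in Figure \ref{fig:square}.

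I expect the main obstacle to be precisely the case analysis for the last two moves. The symmetric-difference counting cleanly pins down the sizes $|D_i|$, but excluding the degenerate identifications among $a, a', c, c'$ is where the care lies: one must invoke, in the right places, both Lemma \ref{lem:triangle} and the crossing property of an edge and its image under a move, and then verify that no unexpected gluing of the two quadrilaterals survives these exclusions.
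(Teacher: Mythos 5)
Your proof is correct, and like the paper's it is ultimately a finite case analysis, but the two arguments are organized around genuinely different devices. The paper simply labels the path $\langle a,b\rangle \leftrightarrow\langle a,c\rangle \leftrightarrow\langle c,d\rangle \leftrightarrow\langle d,e\rangle \leftrightarrow\langle a,b\rangle$, observes that the closing move forces $d$ or $e$ to coincide with $a$ or $b$, and kills the three bad coincidences by pure label arithmetic, using only that a move replaces an edge by a \emph{distinct} edge and that the move performed on a fixed edge of a fixed triangulation is unique (e.g.\ $d=b$ yields two moves on $e$ out of $\langle b,e\rangle$ producing both $a$ and $c$, whence $a=c$, a contradiction); notably, it never invokes Lemma~\ref{lem:triangle}. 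Your symmetric-difference invariant $|D_i|=|\Delta_0\,\triangle\,\Delta_i|$ systematizes the same enumeration and cleanly isolates the only possible coincidence ($c'=a$), but your exclusions route through two ingredients the paper does not use: Lemma~\ref{lem:triangle} and the geometric fact that an edge and its image under a move cross, hence never cohabit a triangulation. That fact is true but deserves its one-line justification (by maximality of $\Delta_1$: the arc $c'$ is disjoint from and non-isotopic to every edge of $\Delta_1\setminus\{c\}$, so if it also missed $c$ it could be added to $\Delta_1$); alternatively you can dispense with it and argue combinatorially in the paper's style: by flip-involution the third move, if performed on $c'$, must reinsert $c$ and give $\Delta_3=\Delta_1$ (excluded by simplicity), so it is performed on $a'$, and inserting $c$ there makes $\Delta_3$ and $\Delta_1$ differ in a single edge, producing a triangle $\Delta_1\Delta_2\Delta_3$ against Lemma~\ref{lem:triangle}; hence $a'\to a$ as you conclude. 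What your route buys is robustness---the counting scheme transfers verbatim to simple closed paths of length five, where the paper's Lemma~\ref{lem:pentagon} is left ``by inspection''---together with your closing verification, which the paper omits, that the two flips of the resulting square are supported on faces with disjoint interiors, so the combinatorial pattern really is the configuration of Figure~\ref{fig:square}. What the paper's version buys is economy: it needs no geometry and no auxiliary lemma, only distinctness and uniqueness of flips.
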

\begin{proof} Consider a simple closed path of length four in  $T(S)$, and let us  label it as
\[\langle a,b\rangle \leftrightarrow\langle a,c\rangle \leftrightarrow\langle c,d\rangle
\leftrightarrow\langle d,e\rangle \leftrightarrow\langle a,b\rangle .\]
Then, the existence of the last move implies that either $d=a$, or
$d=b$, or $e=a$, or $e=b$. We analyze each case separately.

  \begin{enumerate}[$\bullet$]
 \item The case $d=a$ is excluded, because of the existence of the move
 $\langle a,c\rangle \leftrightarrow\langle c,d\rangle$.

 \item If $d=b$ then we get two moves
 $ \langle b,e\rangle \leftrightarrow\langle a,b\rangle$ and
 $\langle c,b\rangle\leftrightarrow\langle b,e\rangle$ which give $a=c$,
 which is excluded since we have a vertex labelled $\langle a,c\rangle$ in the path.

\item  If $e=a$ then we get two moves  $\langle a,b\rangle\leftrightarrow\langle a,c\rangle$ and
   $ \langle a,d\rangle \leftrightarrow\langle a,b\rangle$ which gives $d=c$,
   which is excluded since we have a vertex labelled $\langle c,d\rangle$ in the path.

 \item  The remaining case is $e=b$, which labels the simple closed path in the following way:
    \[\langle a,b\rangle\leftrightarrow\langle a,c\rangle\leftrightarrow\langle
    c,d\rangle\leftrightarrow\langle d,b\rangle\leftrightarrow\langle a,b\rangle,\]
    which is of the form represented in Figure \ref{fig:square}.

  \end{enumerate}
  \end{proof}

 \begin{lemma}\label{lem:pentagon}
 Every simple closed path of length five in $T(S)$ is a pentagon
 (that is, it is of the form described in Figure \ref{fig:pentagon}).
 \end{lemma}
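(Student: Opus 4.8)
The plan is to mimic the proof of Lemma~\ref{lem:square}: label a generic simple closed path of length five by its sequence of edge-pairs, then use the fact that consecutive vertices must share a common surviving edge to force the labels into the pentagon pattern of Figure~\ref{fig:pentagon}. So I would start by writing the path as
\[\langle a,b\rangle \leftrightarrow\langle a,e\rangle
\leftrightarrow\langle e,f\rangle \leftrightarrow\langle f,g\rangle
\leftrightarrow\langle g,h\rangle \leftrightarrow\langle a,b\rangle,\]
where each adjacent pair of vertices agrees in one slot, the shared edge being the one not flipped. The existence of the final move back to $\langle a,b\rangle$ forces a coincidence: one of $g,h$ must equal one of $a,b$. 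As in the square case, the bulk of the argument is a finite case analysis, so the core step is to enumerate these coincidences and discard the ones that violate either Lemma~\ref{lem:triangle} (no triangles, giving shortcuts) or the simplicity/injectivity of the path (no repeated vertices).

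The key structural input I would exploit is that two triangulations joined by an elementary move differ in exactly one edge; hence writing a vertex as a pair $\langle x,y\rangle$ records the one edge flipped into it and the one flipped out of it on either side. First I would use this to argue that the five ``active'' arcs occurring in the flips cannot collapse the pentagon into a shorter closed loop: any premature coincidence among the labels would produce either a repeated vertex (contradicting that the path is simple) or a closed path of length three or four, which by Lemmas~\ref{lem:triangle} and~\ref{lem:square} cannot be an unexpected degenerate configuration. After ruling out the short-circuit cases, the surviving labelling is exactly the cyclic pattern $\langle a,b\rangle, \langle a,e\rangle, \langle e,d\rangle, \langle d,c\rangle, \langle c,b\rangle$ of Figure~\ref{fig:pentagon}, up to renaming.

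The main obstacle I anticipate is the combinatorial bookkeeping: with five vertices and five flips there are noticeably more coincidence cases than in the length-four situation, and several of them must be eliminated by tracing two or three consecutive moves rather than a single adjacent pair. In particular, I expect the delicate point to be showing that the pattern cannot close up with two of the flipped arcs being equal in a way that would make the pentagon secretly a square with a repeated vertex; this requires carefully using injectivity of the path together with the identity ``a move on an edge changes only that edge.'' I would organize the elimination into an enumerated list of cases, matching the style of the previous lemma, deriving in each discarded case either a forbidden equality of two distinct labelled arcs or a contradiction with simplicity, and I would conclude by exhibiting the one remaining case as the desired pentagon of Figure~\ref{fig:pentagon}.
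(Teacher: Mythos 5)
Your proposal matches the paper's proof, which is stated in one line: ``by inspection, like the proof of Lemma~\ref{lem:square} (and it uses Lemma~\ref{lem:triangle})'' --- that is, exactly the labelling-and-coincidence case analysis you describe, with short closed loops ruled out by Lemma~\ref{lem:triangle} and by simplicity of the path. Your plan is correct and is essentially the same approach, merely spelled out in more detail than the paper bothers to.
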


\begin{proof} The proof is by inspection, like the proof of
Lemma \ref{lem:square} (and it uses Lemma \ref{lem:triangle}).
\end{proof}

In the next two lemmas, and later in the paper, if $\Delta$ and
$\Delta'$ are two triangulations of $S$ connected by an edge in
$T(S)$, then the notation $\Delta-\Delta'$ will be used to denote
(the isotopy classes of) the edge in $\Delta$ that is not in
$\Delta'$.

 \begin{lemma}\label{lem:square1}
 Consider a square in $T(S)$, represented as
 $\Delta_1\leftrightarrow \Delta_2\leftrightarrow \Delta_3\leftrightarrow
 \Delta_4 \leftrightarrow \Delta_1$. Then we have $\Delta_1-\Delta_4=\Delta_2-\Delta_3$.
 \end{lemma}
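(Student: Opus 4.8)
The plan is to reduce everything to the normal form of a square. By Lemma \ref{lem:square}, any simple closed path of length four in $T(S)$ is a square, so we may label the four triangulations exactly as in Figure \ref{fig:square}. Writing the cycle diagrammatically as
\[\langle a,c\rangle \leftrightarrow \langle a,c'\rangle \leftrightarrow \langle a',c'\rangle \leftrightarrow \langle a',c\rangle \leftrightarrow \langle a,c\rangle,\]
we set $\Delta_1=\langle a,c\rangle$, $\Delta_2=\langle a,c'\rangle$, $\Delta_3=\langle a',c'\rangle$, and $\Delta_4=\langle a',c\rangle$. In this labeling the move $\Delta_1\leftrightarrow\Delta_2$ is performed on $c$ (which becomes $c'$), the move $\Delta_2\leftrightarrow\Delta_3$ is performed on $a$ (which becomes $a'$), the move $\Delta_3\leftrightarrow\Delta_4$ on $c'$, and the move $\Delta_4\leftrightarrow\Delta_1$ on $a'$; every other edge of the four triangulations is common to all of them. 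Since the path is simple and each move replaces an edge by a \emph{distinct} edge, we have $a\neq a'$ and $c\neq c'$, so the four vertices are genuinely distinct.

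First I would read off $\Delta_1-\Delta_4$. Because two triangulations joined by an edge of $T(S)$ differ by exactly one arc, this notation is well defined. The move $\Delta_4\leftrightarrow\Delta_1$ replaces $a'$ by $a$ while leaving $c$ and every remaining edge fixed, so the unique edge of $\Delta_1$ not belonging to $\Delta_4$ is $a$; that is, $\Delta_1-\Delta_4=a$. Next I would compute $\Delta_2-\Delta_3$ in the same way: the move $\Delta_2\leftrightarrow\Delta_3$ replaces $a$ by $a'$ while leaving $c'$ and every remaining edge fixed, so the unique edge of $\Delta_2$ absent from $\Delta_3$ is again $a$, whence $\Delta_2-\Delta_3=a$. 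Comparing the two computations gives $\Delta_1-\Delta_4=a=\Delta_2-\Delta_3$, which is the assertion of the lemma.

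The only point requiring care, and the one place where Lemma \ref{lem:square} does the real work, is the claim that the two moves on opposite sides of the four-cycle, namely $\Delta_4\leftrightarrow\Delta_1$ and $\Delta_2\leftrightarrow\Delta_3$, act on one and the same pair $\{a,a'\}$ of arcs rather than on two a priori unrelated pairs. This is exactly what the normal form furnished by Lemma \ref{lem:square} guarantees: a square is assembled from two independent elementary moves, one interchanging $a$ and $a'$ and the other interchanging $c$ and $c'$, and the two occurrences of the $\{a,a'\}$-move necessarily sit on opposite edges of the cycle. Once this normal form is granted, no geometric argument about the faces of $S$ is needed, and the conclusion follows by directly bookkeeping which arc distinguishes each vertex from its neighbor.
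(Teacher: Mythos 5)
Your proof is correct and follows the same route as the paper, which simply states that the identity ``can be checked on Figure \ref{fig:square}''; you have merely made that inspection explicit by reading off that both $\Delta_1-\Delta_4$ and $\Delta_2-\Delta_3$ equal the arc $a$ in the normal form $\langle a,c\rangle \leftrightarrow \langle a,c'\rangle \leftrightarrow \langle a',c'\rangle \leftrightarrow \langle a',c\rangle \leftrightarrow \langle a,c\rangle$. Note only that the hypothesis already says ``square,'' i.e.\ a path of the form in Figure \ref{fig:square}, so your appeal to Lemma \ref{lem:square} to reduce a general simple four-cycle to this form, while harmless, is not needed here.
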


\begin{proof}
This can be checked on Figure \ref{fig:square}.
\end{proof}

 \begin{lemma}\label{lem:pentagon1}
Consider a pentagon in $T(S)$, represented as
$\Delta_1\leftrightarrow  \Delta_2\leftrightarrow \Delta_3
\leftrightarrow  \Delta_4 \leftrightarrow  \Delta_5
\leftrightarrow  \Delta_1$. Then we have
$\Delta_1-\Delta_5=\Delta_2-\Delta_3$.
 \end{lemma}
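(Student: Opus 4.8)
The plan is to reduce the statement to a direct inspection of the standard pentagon, exactly in the spirit of the proof of Lemma \ref{lem:square1}. First I would invoke Lemma \ref{lem:pentagon} to replace the arbitrary simple closed path of length five by a pentagon presented as in Figure \ref{fig:pentagon}. Up to relabelling, this lets me identify the five vertices with the labels appearing in the diagram, namely
\[
\Delta_1=\langle a,b\rangle,\quad \Delta_2=\langle a,e\rangle,\quad \Delta_3=\langle e,d\rangle,\quad \Delta_4=\langle d,c\rangle,\quad \Delta_5=\langle c,b\rangle,
\]
together with the cyclic convention $\Delta_6=\Delta_1=\langle b,a\rangle$.

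With this normalization fixed, I would read off the two moves that enter the statement, using that in the notation $\langle\cdot,\cdot\rangle$ two consecutive vertices share exactly one displayed arc (the invariant one), while the other displayed arc is the one being flipped. Applying the definition of $\Delta-\Delta'$ as the edge of $\Delta$ that is absent from $\Delta'$, the move $\Delta_2\leftrightarrow\Delta_3$ keeps $e$ fixed and flips $a$, so $\Delta_2-\Delta_3=a$; likewise the move $\Delta_5\leftrightarrow\Delta_1$ keeps $b$ fixed and sends $c$ to $a$, so the edge of $\Delta_1$ absent from $\Delta_5$ is $a$, that is $\Delta_1-\Delta_5=a$. Comparing these two computations yields $\Delta_1-\Delta_5=a=\Delta_2-\Delta_3$, which is the claimed equality.

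Since the statement is purely combinatorial once the pentagon is in standard form, I do not expect any serious obstacle. The only point requiring care is the bookkeeping of the $\langle\cdot,\cdot\rangle$ notation: at each of the two relevant moves one must correctly single out the invariant arc (the one common to both adjacent labels) from the flipped arc, and one must apply the cyclic identification $\Delta_6=\Delta_1$ consistently when handling the move $\Delta_5\leftrightarrow\Delta_1$. This is the same style of verification already used for the square in Lemma \ref{lem:square1}.
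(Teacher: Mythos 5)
Your proposal is correct and matches the paper's approach: the paper's proof of Lemma~\ref{lem:pentagon1} is simply a direct inspection of the standard pentagon of Figure~\ref{fig:pentagon}, which is exactly the verification you carry out (your readings $\Delta_2-\Delta_3=a$ and $\Delta_1-\Delta_5=a$ are both correct under the paper's $\langle\cdot,\cdot\rangle$ convention). You merely make the inspection explicit, including the harmless normalization via Lemma~\ref{lem:pentagon}, so there is nothing to add.
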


\begin{proof}
This can be checked on Figure \ref{fig:pentagon}.
\end{proof}

 \section{Proof of Theorem \ref{th:automorphism}}
Let $f:T(S)\to T(S)$ be a simplicial automorphism. We associate to
$f$ a simplicial automorphism $\widetilde{f}:A(S)\to A(S)$,
defined as follows.

Let $a$ be an essential arc on $S$. We choose a triangulation
$\Delta$ in which $a$ is an exchangeable edge. Such a
triangulation always exists. Since $a$ is  exchangeable in
$\Delta$, we can perform an elementary move on $a$, replacing it
by an arc $a'$. Let $\Delta_a$ be the triangulation obtained from
$\Delta$ by this elementary move. Since $f$ is simplicial, the
triangulations $f(\Delta)$ and $f(\Delta_a)$ (like $\Delta$ and
$\Delta_a$) are joined by an edge in $T(S)$. In other words, the
two triangulations $f(\Delta)$ and $f(\Delta_a)$ differ by an
elementary move. We then define $\widetilde{f}(a)$ to be the edge
that is in $f(\Delta)$ but not in $f(\Delta_a)$. In short, we have
\[\widetilde{f}(a)=f(\Delta) - f( \Delta_a ).\]

 We now prove the following:

\begin{proposition} \label{prop:well}
The map $\widetilde{f}:A(S)\to A(S)$ is well-defined. That is, for
any vertex $a$ in $A(S)$, the arc $\widetilde{f}(a)$ is
independent of the choice of the triangulation $\Delta$ in which
$a$ is exhangeable.
\end{proposition}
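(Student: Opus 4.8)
The plan is to reformulate the claim as the statement that the assignment
\[g(\Delta) = f(\Delta) - f(\Delta_a)\]
(where $\Delta_a$ denotes the triangulation obtained from $\Delta$ by performing the elementary move on $a$) is constant on the set $\mathcal{T}_a$ of all triangulations in which $a$ is an exchangeable edge. To prove that $g$ is constant, I would first reduce to comparing triangulations that are adjacent in a suitable sense: invoking the connectivity of the flip graph of the surface cut along $a$ (in the spirit of \cite{Harer} and \cite{Hatcher}), I would join any two members $\Delta,\Delta'$ of $\mathcal{T}_a$ by a finite chain of triangulations, each containing $a$ as an exchangeable edge, in which consecutive triangulations differ by a single elementary move performed on an edge $b\neq a$. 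It then suffices to prove $g(\Delta)=g(\Delta')$ when $\Delta$ and $\Delta'$ differ by exactly one such move.

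For two such triangulations there are exactly two local configurations, according to whether the flipped edge $b$ is disjoint from, or shares a triangle with, the two triangles adjacent to $a$. In the first case the move on $b$ and the move on $a$ are independent, so the four triangulations $\langle a,b\rangle$, $\langle a,b'\rangle$, $\langle a',b'\rangle$, $\langle a',b\rangle$ (primes denoting flipped edges) are distinct and form a simple closed path of length four in $T(S)$. Writing $\Delta=\langle a,b\rangle$ and $\Delta'=\langle a,b'\rangle$, one has $\Delta_a=\langle a',b\rangle$ and $(\Delta')_a=\langle a',b'\rangle$, so that $g(\Delta)$ and $g(\Delta')$ are precisely the two ``opposite'' differences appearing in Lemma~\ref{lem:square1}. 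Since $f$ is a simplicial automorphism it carries this simple $4$-cycle to another simple $4$-cycle, which is a square by Lemma~\ref{lem:square}; applying Lemma~\ref{lem:square1} to the image square yields $f(\Delta)-f(\Delta_a)=f(\Delta')-f((\Delta')_a)$, that is, $g(\Delta)=g(\Delta')$.

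In the second case the triangles adjacent to $a$ and to $b$ fit together into the configuration underlying the pentagon relation, and the five triangulations $\langle a,b\rangle$, $\langle a,e\rangle$, $\langle e,d\rangle$, $\langle d,c\rangle$, $\langle c,b\rangle$ form a simple closed path of length five. Here flipping $b$ in $\Delta=\langle a,b\rangle$ gives $\Delta'=\langle a,e\rangle$, while flipping $a$ gives $\Delta_a=\langle c,b\rangle$, and flipping $a$ in $\Delta'$ gives $(\Delta')_a=\langle e,d\rangle$; thus $g(\Delta)$ and $g(\Delta')$ are exactly the two differences in Lemma~\ref{lem:pentagon1}. As before, $f$ sends this $5$-cycle to a simple $5$-cycle, which is a pentagon by Lemma~\ref{lem:pentagon}, and Lemma~\ref{lem:pentagon1} applied to the image gives $g(\Delta)=g(\Delta')$. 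Chaining these equalities along the connecting path then shows that $g$ is constant, and hence that $\widetilde{f}(a)$ is well defined.

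The hard part, I expect, is not the two local computations but the reduction step: one must check both that $\mathcal{T}_a$ is connected through moves that keep $a$ an exchangeable edge (so that $g$ is defined at every triangulation along the chain, avoiding the self-folded configurations of Figure~\ref{fig:isolated} in which $a$ would become non-exchangeable), and that the ``shares a triangle'' case genuinely produces an embedded pentagon rather than a degenerate configuration in which two of the five faces coincide or the region wraps around the surface. Confirming that these degeneracies either do not occur, or can be routed around within $\mathcal{T}_a$, is where the care is needed; once this is in place, Lemmas~\ref{lem:square1} and~\ref{lem:pentagon1} do the rest.
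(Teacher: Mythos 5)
Your overall route coincides with the paper's: cut along $a$, connect the two triangulations through triangulations containing $a$ via Harer's connectivity theorem for the cut surface, reduce to the case of a single elementary move on an edge $b\neq a$, and settle the two local configurations with the square and pentagon lemmas. Your local computations are exactly right and exactly what the paper does, including the key observation that a simplicial automorphism carries a simple closed path of length four or five to another simple closed path of the same length, which is then a square or a pentagon by Lemmas \ref{lem:square} and \ref{lem:pentagon}, so that Lemmas \ref{lem:square1} and \ref{lem:pentagon1} apply to the image cycle. However, the step you defer as ``where the care is needed'' is a genuine gap, and it is precisely the step to which the paper devotes a separate argument (Lemma \ref{lem:assume}). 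Connectivity of the flip graph of the cut surface $R$, relative to the two boundary edges $a_1,a_2$ and the distinguished points, only produces a path in $T(S)$ all of whose vertices \emph{contain} $a$; it does not produce a path along which $a$ remains \emph{exchangeable}. After regluing, $a$ is non-exchangeable at an intermediate vertex exactly when $a$ joins two distinct punctures and a single triangle of the induced triangulation of $R$ has both $a_1$ and $a_2$ as sides, so that $a$ lies in the closure of a unique face (the configuration of Figure \ref{fig:isolated}); nothing in Harer's theorem prevents this from occurring along the path, so your function $g$ is simply undefined at such vertices and your chain of equalities breaks.

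The paper closes this gap by an explicit rerouting induction: taking $\Delta_i$ to be the first vertex at which $a$ is non-exchangeable, it replaces the subpath $\Delta_{i-1}\leftrightarrow\Delta_i\leftrightarrow\Delta_{i+1}$ by a detour of length two or three (a square or a pentagon, according to whether the move from $\Delta_i$ to $\Delta_{i+1}$ is performed on an edge of a triangle having the dual edge $a^*$ as a side, see Figure \ref{fig:reduce}), which keeps $a$ in every vertex and strictly decreases the number of vertices at which $a$ is non-exchangeable. Some argument of this kind is needed for your chain through $\mathcal{T}_a$ to exist, and it does not follow formally from connectivity alone. Your second worry, by contrast, is dispatched cheaply: the pentagon attached to a flip of two edges sharing a triangle degenerates only on the once-punctured torus and the thrice-punctured sphere, and these are excluded by the standing hypothesis on $S$; this is exactly the point at which the paper invokes that hypothesis in its Case 1.
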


 \begin{proof}

Denoting a triangulation as the set of its edges, let
$\Delta=\{a,c_1,\ldots,c_k\}$ and $\Delta'=\{a,c'_1,\ldots,c'_k\}$
be two different triangulations used in the definition of
$\widetilde{f}(a)$.

  Let $R$ be the surface $S$ cut along $a$.  There are two cases for the surface $R$:
  \begin{enumerate}
  \item \label{b1} $R$ has two boundary components
  coming from the curve $a$,
  and in this case there is one distinguished point on each of these boundary
  components, coming from the puncture at the endpoints of $a$. This occurs
  when the arc $a$ joins one puncture of $S$ to itself.
  \item \label{b2}  $R$ has one boundary component, with two distinguished points
  on that boundary. This occurs when $a$ joins two distinct punctures on $S$.
  \end{enumerate}

The triangulations $\Delta$ and $\Delta'$ naturally induce
triangulations on $R$, with the  labelling by edges, $\{a_1,
a_2,c_1,\ldots,c_k\}$ and   $\{a_1, a_2,c'_1,\ldots,c'_k\}$, where
the edge $a$ has been replaced by two edges $a_1$ and $a_2$. In
Case \ref{b1}, each of the edges $a_1$ and $a_2$ appears  on a
boundary component of $R$, and in Case \ref{b2}, the union of the
edges $a_1$ and $a_2$  forms the boundary component of $R$, and
there are two distinguished points on that boundary component.

In each case, we can join the two triangulations induced on $R$ by
$\Delta$ and $\Delta'$ by a finite sequence of elementary moves on
$R$, in such a way that the two distinguished points and the two
edges $a_1$ and $a_2$ on $R$ are left fixed by these elementary
moves. This follows from Harer's result on the connectedness of
the triangulation graph of a surface with boundary and with
distinguished points on boundary components. (See \cite{Harer};
the result is also cited in \cite{Hatcher}.)

Now gluing $a_1$ to $a_2$ back gives a simplicial path in $T(S)$
joining $\Delta$ to $\Delta'$ such that each triangulation in this
path contains $a$ as an edge. We denote the sequence of
triangulations in this path by
\begin{equation}\label{sequence}
\Delta=\Delta_0,\Delta_1,\ldots,\Delta_l=\Delta'.
\end{equation}

 We can assume that this path is simple.

We shall use the following:

\begin{lemma} \label{lem:assume}
We can assume that we can choose the path {\rm (\ref{sequence})}
in such a way that the edge $a$ is exchangeable in each
triangulation representing a vertex of this path.
\end{lemma}

\begin{figure}[hbp]
\psfrag{A}{$\Delta_{i-1}$}
 \psfrag{B}{$\Delta_i$}
  \psfrag{C}{$\Delta_{i+1}$}
   \psfrag{D}{$\Delta'_{i}$}
      \psfrag{E}{$\Delta'_{i+1}$}
     \psfrag{1}{(a)}
    \psfrag{2}{(b)}
\centering
\includegraphics[width=.7\linewidth]{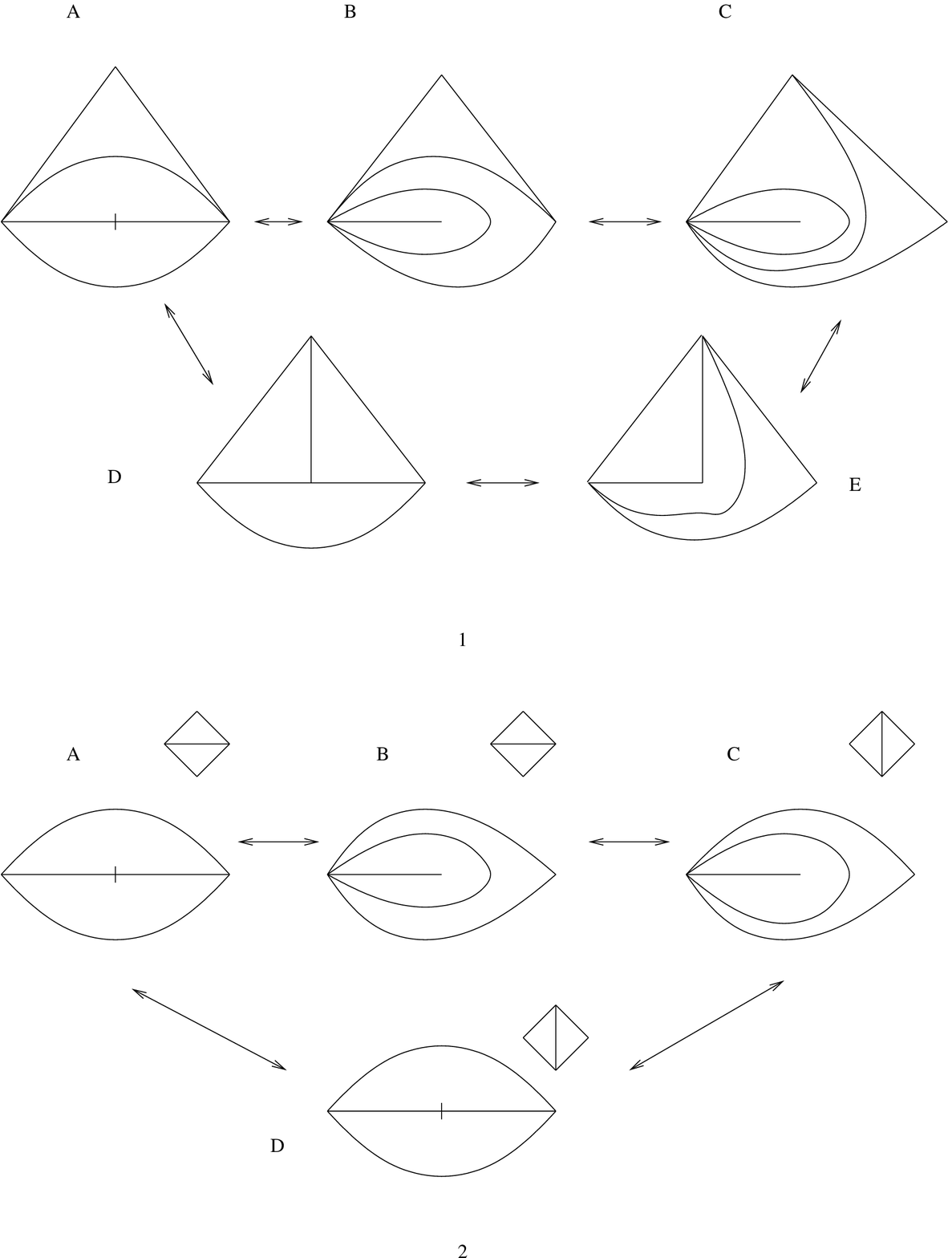}
\caption{\small{Replacing a path in which each vertex contains $a$ by a path
where at each vertex  $a$  is an exchangeable edge.}}
\label{fig:reduce}
\end{figure}

\begin{proof}

Suppose there is a  vertex in the path (\ref{sequence}) that
represents a triangulation in which $a$ is not exchangeable, and
let $\Delta_i$ be the triangulation represented by the first such
vertex, after the vertex $\Delta$. Then, $\Delta_{i+1}$
(respectively $\Delta_{i-1}$) is the triangulation represented by
the vertex after (respectively before) $\Delta_i$. Note that since
$a$ is exchangeable in $\Delta$ and in $\Delta'$, we have $1\leq
i<l$. There are two possibilities for the vertex joining
$\Delta_i$ and $\Delta_{i+1}$, and they are represented
respectively in Figure \ref{fig:reduce} (a) and (b).

The first possibility (top of Figure \ref{fig:reduce} (a)) is when
the elementary move that takes $\Delta_i$ to $\Delta_{i+1}$
involves an edge that is on the boundary of a triangle having the
edge $a^*$ (the dual of $a$) as an edge. (Note that we exclude the
case where the elementary move is performed on the edge $a^*$,
since in that case we recover the triangulation $\Delta_{i-1}$ as
a result, but we assumed the path is simple.) In this case, we
replace the subpath
$\Delta_{i-1}\leftrightarrow\Delta_i\leftrightarrow\Delta_{i+1}$
by the path
$\Delta_{i-1}\leftrightarrow\Delta'_i\leftrightarrow\Delta'_{i+1}\leftrightarrow\Delta_{i+1}$
represented in the bottom of Figure \ref{fig:reduce} (a).

The second possibility (top of Figure \ref{fig:reduce} (b)) is
when the elementary move that takes $\Delta_i$ to $\Delta_{i+1}$
involves an edge that is not on the boundary of a triangle whose
boundary contains $a^*$. In Figure \ref{fig:reduce} (b), we have
symbolically represented that elementary move on a quadrilateral
that is disjoint from $a$ and $a^*$. In that case, we replace the
subpath
$\Delta_{i-1}\leftrightarrow\Delta_i\leftrightarrow\Delta_{i+1}$
by the path
$\Delta_{i-1}\leftrightarrow\Delta'_i\leftrightarrow\Delta_{i+1}$
represented in Figure \ref{fig:reduce} (b).

In each case, each vertex of the new simplicial path that joins
$\Delta$ to $\Delta'$ contains $a$ as an edge, and the number of
occurrences of vertices in this path in which $a$ is
non-exchangeable has been reduced by one. This allows us, by
induction, to obtain a path joining $\Delta$ and $\Delta'$  in
$T(S)$, in which every vertex is represented by a triangulation
that contains $a$ and where $a$ is exchangeable.

 This proves Lemma \ref{lem:assume}

\end{proof}

 We now continue with the proof of Proposition \ref{prop:well}.

Note that the closed paths in $T(S)$ that are represented in
Figure \ref{fig:reduce} (a) and  (b) are a pentagon and a square.

To prove that $\widetilde{f}$ is well-defined, by
Lemma~\ref{lem:assume} it suffices to consider the case where
$\Delta$ and $\Delta'$ are joined by an edge in $T(S)$.

By reordering the edges of the triangulation $\Delta$, we can
assume that $c_1$ is the vertex that is transformed by the
elementary move that takes $\Delta$ to $\Delta'$. We use as before
the notation $\Delta\leftrightarrow \Delta'$, with $\Delta=\langle
a,c_1\rangle $ and $\Delta'=\langle a,c'_1\rangle$, to denote this
elementary move, in which $c'_1$ is the image of $c_1$. Edges
other than $c_1$ are unchanged.
 We distinguish two cases:

\begin{figure}[hbp]
    \psfrag{1}{(a)}
\psfrag{2}{(b)}
 \psfrag{a}{$a$}
\psfrag{x}{$c_1$}
 \psfrag{y}{$c'_1$}
\psfrag{z}{$a'$}
 \psfrag{d}{$d$}
\psfrag{x}{$c_1$}
\centering
\includegraphics[width=.6\linewidth]{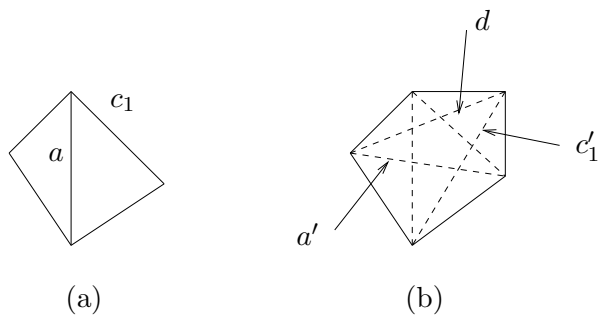}
\caption{\small{The star in the right hand side figure corresponds to a pentagon relation.}}
\label{fig:star}
\end{figure}

   \noindent {\bf Case 1.---}
In the triangulation $\Delta$, $a$ and $c_1$ are two edges of a
common triangle (see Figure \ref{fig:star} (a)). Then, since $S$
is not a torus with one puncture or a sphere with three punctures,
the edge joining $\Delta$ and $\Delta'$ in $T(S)$ belongs to a
pentagon in this graph. This follows from the fact that both $a$
and $c_1$ are exchangeable, and it can be seen diagrammatically in
Figure  \ref{fig:star} (b), where the sequence of elementary moves
representing the pentagon relation is:
  \[\langle a,c_1\rangle \leftrightarrow\langle a,c'_1\rangle,
  \leftrightarrow\langle c'_1,d\rangle, \leftrightarrow\langle d,a'\rangle,
  \leftrightarrow\langle a',c_1\rangle, \leftrightarrow\langle c_1,a\rangle.
  \]

Let $\mathcal{P}$ denote this pentagon. Since $f:T(S)\to T(S)$ is
simplicial, the image $f(\mathcal{P})$ of $\mathcal{P}$ by $f$ is
again a pentagon (Lemma \ref{lem:pentagon}). In the pentagon
$\mathcal{P}$, the triangulation $\langle a,c_1\rangle$ is
labelled $\Delta$, the triangulation  $\langle a',c_1\rangle$ is
labelled $\Delta_a$, the triangulation $\langle a,c'_1\rangle$ is
labelled $\Delta'$ and the triangulation  $\langle d,c'_1\rangle$
is labelled $\Delta'_a$. We then have, by Lemma
\ref{lem:pentagon1} applied to the pentagon $f(\mathcal{P})$,
 \[f(\Delta)-f(\Delta_a)=f(\Delta')-f(\Delta'_a).\]
 This completes the proof of the fact the $\widetilde{f}(a)$ is well-defined in Case 1.

   \noindent {\bf Case 2.---}
In the triangulation $\Delta$, $a$ and $c_1$ are not edges of the
same triangle. In this case, we can perform elementary moves on
$a$ and $c_1$ independently from each other, and we obtain the
following square in  $T(S)$:
 \[\langle a,c_1\rangle \leftrightarrow\langle a,c'_1\rangle ,
 \leftrightarrow\langle a'_1,c'_1\rangle , \leftrightarrow\langle
 a',c_1\rangle , \leftrightarrow\langle a,c\rangle .
 \]
Similarly to the preceding case, the triangulation $\langle
a,c_1\rangle $  is labelled  $\Delta$, the triangulation  $\langle
a',c_1\rangle $ is labelled $\Delta_a$, the triangulation $\langle
a,c'_1\rangle $ is labelled $\Delta'$ and the triangulation
$\langle a'_1,c'_1\rangle $ is labelled $\Delta'_a$, and we get
again, this time using Lemma \ref{lem:square1},
  \[f(\Delta)-f(\Delta_0)=f(\Delta')-f(\Delta'_0).\]

  This shows that  $\widetilde{f}(a)$ is well-defined in each case.

  This completes the proof of Proposition \ref{prop:well}.

  \end{proof}

 The following naturality formula will be useful.
  \begin{proposition}\label{prop:set}
 For any ideal triangulation $\Delta=\{ c_0,c_1,\ldots, c_k\}$, we have
 \[ f(\Delta)=\{\widetilde{f}(c_0),\widetilde{f}(c_1),\ldots,
 \widetilde{f}(c_k)\}.\]
  \end{proposition}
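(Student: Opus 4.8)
The plan is to deduce the identity from two facts together with a cardinality count. Recall first that every ideal triangulation of $S$ has the same number of edges, namely $k+1=6g-6+3n$; this is the Euler characteristic count already used in the proof of Theorem \ref{not-hyp}, where the number of triangles is $4g-4+2n$, so that $2E=3F$ gives $E=6g-6+3n$. Since $f$ is a simplicial automorphism, $f(\Delta)$ is again an ideal triangulation, and hence has exactly $k+1$ edges. It therefore suffices to prove: (i) that $\widetilde{f}(c_i)$ is an edge of $f(\Delta)$ for every $i$; and (ii) that the arcs $\widetilde{f}(c_0),\dots,\widetilde{f}(c_k)$ are pairwise distinct. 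Granting (i) and (ii), the set $\{\widetilde{f}(c_0),\dots,\widetilde{f}(c_k)\}$ consists of $k+1$ distinct edges all lying in the $(k+1)$-edge triangulation $f(\Delta)$, which forces the two sets to coincide.

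I would establish (ii) first, in the strong form that $\widetilde{f}$ is injective on the vertices of $A(S)$, since this also streamlines (i). The automorphism $f^{-1}$ gives rise, by the construction preceding Proposition \ref{prop:well}, to a map $\widetilde{f^{-1}}:A(S)\to A(S)$, and the claim is that $\widetilde{f^{-1}}\circ\widetilde{f}=\mathrm{id}$. To check this, fix an arc $c$ and a triangulation $\Gamma$ in which $c$ is exchangeable, and write $\Gamma_c$ for the result of the flip on $c$. By definition $\widetilde{f}(c)=f(\Gamma)-f(\Gamma_c)$, which is the exchangeable edge of $f(\Gamma)$ whose flip produces the neighbour $f(\Gamma_c)$. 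Computing $\widetilde{f^{-1}}$ via the triangulation $f(\Gamma)$, which is legitimate by Proposition \ref{prop:well} applied to $f^{-1}$, the flip of $\widetilde{f}(c)$ in $f(\Gamma)$ is exactly $f(\Gamma_c)$, whence $\widetilde{f^{-1}}(\widetilde{f}(c))=f^{-1}(f(\Gamma))-f^{-1}(f(\Gamma_c))=\Gamma-\Gamma_c=c$. Thus $\widetilde{f}$ is injective, which gives (ii).

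It remains to prove (i). If $c_i$ is exchangeable in $\Delta$ this is immediate: the flip on $c_i$ produces a neighbour $\Delta_{c_i}$ of $\Delta$, and by definition $\widetilde{f}(c_i)=f(\Delta)-f(\Delta_{c_i})$ is an edge of $f(\Delta)$. The genuine difficulty, and the step I expect to be the main obstacle, is a non-exchangeable edge $a=c_i$, on which no flip is possible inside $\Delta$, so that $\widetilde{f}(a)$ must be computed in some other triangulation. Here I would use the dual edge $a^{*}$ of Figure \ref{fig:isolated}: from the local structure of a non-exchangeable edge, $a^{*}$ is exchangeable in $\Delta$, and flipping it opens up the folded triangle so that $a$ becomes exchangeable in the resulting triangulation $\Delta^{*}$. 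By Proposition \ref{prop:well} I may then compute $\widetilde{f}(a)=f(\Delta^{*})-f(\Delta^{*}_a)$, so $\widetilde{f}(a)\in f(\Delta^{*})$. On the other hand $\Delta$ and $\Delta^{*}$ are adjacent, differing by the flip on $a^{*}$, so that $f(\Delta)$ and $f(\Delta^{*})$ are adjacent and differ precisely in the edges $\widetilde{f}(a^{*})=f(\Delta)-f(\Delta^{*})$ and $\widetilde{f}\bigl((a^{*})'\bigr)=f(\Delta^{*})-f(\Delta)$, where $(a^{*})'$ is the flip of $a^{*}$; that is, $f(\Delta^{*})=\bigl(f(\Delta)\setminus\{\widetilde{f}(a^{*})\}\bigr)\cup\{\widetilde{f}((a^{*})')\}$. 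Since $a$ and $(a^{*})'$ are distinct edges of $\Delta^{*}$, the injectivity established above gives $\widetilde{f}(a)\neq\widetilde{f}((a^{*})')$, and therefore $\widetilde{f}(a)\in f(\Delta)$, completing (i). The only delicate point is the local claim that $a^{*}$ is exchangeable and that its flip makes $a$ exchangeable; this is exactly where the hypothesis excluding the small surfaces is used, in the same way as in Case 1 of the proof of Proposition \ref{prop:well}, and it is verified directly from the picture of Figure \ref{fig:isolated}.
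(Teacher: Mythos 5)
Your proof is correct, and its core coincides with the paper's: for a non-exchangeable edge $a$, both arguments flip the dual edge $a^{*}$ to reach a triangulation $\Delta^{*}$ in which $a$ is exchangeable, compute $\widetilde{f}(a)$ there via Proposition \ref{prop:well}, and then transfer membership from $f(\Delta^{*})$ back to $f(\Delta)$. Where you diverge is in the finishing logic. The paper concludes directly: $\widetilde{f}(a)$ lies in $f(\Delta^{*})$ and is the edge removed by the move $f(\Delta^{*})\leftrightarrow f(\Delta^{*}_a)$, so it cannot be the edge removed by the move $f(\Delta^{*})\leftrightarrow f(\Delta)$ (otherwise flipping it in $f(\Delta^{*})$ would produce both $f(\Delta)$ and $f(\Delta^{*}_a)$, contradicting injectivity of $f$), hence it survives into $f(\Delta)$. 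You instead rule out $\widetilde{f}(a)=\widetilde{f}((a^{*})')$ by first proving $\widetilde{f^{-1}}\circ\widetilde{f}=\mathrm{id}$ on vertices of $A(S)$; this is in substance the special case $h=f^{-1}$ of the paper's Propositions \ref{prop:homo} and \ref{prop:autom}, which appear \emph{after} Proposition \ref{prop:set} but whose proofs do not depend on it, so there is no circularity, and your self-contained derivation is fine. Your injectivity-plus-counting wrapper actually buys something: the paper states that it suffices to prove $f(\Delta)\subset\{\widetilde{f}(c_0),\ldots,\widetilde{f}(c_k)\}$ but then proves the reverse inclusion $\{\widetilde{f}(c_i)\}\subset f(\Delta)$, which yields equality only once the arcs $\widetilde{f}(c_i)$ are known to be pairwise distinct; your step (ii) supplies exactly that distinctness, together with the count $k+1=6g-6+3n$, so your version is the more complete one. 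One small misattribution: the local claim that $a^{*}$ is exchangeable and that flipping it makes $a$ exchangeable holds for every triangulated punctured surface directly from the configuration of Figure \ref{fig:isolated} (in that configuration $a^{*}$ is a loop based at a single puncture, hence never non-exchangeable, and its second side lies in a face distinct from the folded triangle); the hypotheses excluding the small surfaces are not needed here but rather in Case 1 of the proof of Proposition \ref{prop:well}, which you invoke anyway, so this does not affect the validity of your argument.
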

  \begin{proof}
It suffices to prove that
$f(\Delta)\subset\{\widetilde{f}(c_0),\widetilde{f}(c_1),\ldots,
\widetilde{f}(c_k)\}$. Let $a$ be   any (homotopy class of) edge
in $\Delta$. If $a$ is exchangeable, then it follows from the
definition of $\widetilde{f}$   that $\widetilde{f}(a)$ is in the
simplex $f(\Delta)$. Assume now that $a$ is not exchangeable, and
let $a^*$ be its dual edge. Then  $a^*$ is exchangeable in
$\Delta$. Let $\Delta'=\Delta_{a^{*}}$ be the ideal triangulation
obtained from $\Delta$ by exchanging  the edge $a^*$. Finally, let
$\Delta'_a$ be the ideal triangulation obtained from $\Delta'$ by
exchanging  the edge $a$.   Since there is an edge in $T(S)$
joining $\Delta$ to $\Delta'$, and an edge in $T(S)$ joining
$\Delta'$ to $\Delta'_a$, and since $f$ is simplicial,  there is
an edge in $T(S)$ joining $f(\Delta)$ to $f(\Delta')$, and an edge
in $T(S)$ joining $f(\Delta')$ to $f(\Delta'_a)$. By definition,
$\widetilde{f}(a)$ belongs to the triangulation $f(\Delta')$.
Then, the edge $\widetilde{f}(a)$ is transformed by the move
$f(\Delta')\leftrightarrow f(\Delta'_a)$. Therefore,
$\widetilde{f}(a)$ is not an edge of $f(\Delta'_a)$. But since
$\widetilde{f}(a)$ is not transformed by the move
$f(\Delta)\leftrightarrow f(\Delta')$, we conclude that
$\widetilde{f}(a)$ is an edge of $f(\Delta)$.
   \end{proof}

\begin{corollary}\label{cor:simplicial}
The map $\widetilde{f}:A(S)\to A(S)$ is simplicial.
\end{corollary}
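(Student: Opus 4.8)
The plan is to show that $\widetilde{f}$ carries every simplex of $A(S)$ to a simplex, which by the definition of $A(S)$ means carrying any collection of pairwise disjoint, pairwise non-homotopic essential arcs to another such collection. The main tool will be the naturality formula of Proposition~\ref{prop:set}, combined with the elementary fact that every simplex of $A(S)$ is a face of a maximal simplex, i.e.\ that any collection of disjoint essential arcs can be completed to an ideal triangulation.

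First I would take an arbitrary simplex $\sigma=\{a_0,\ldots,a_m\}$ of $A(S)$, so that the $a_i$ are pairwise disjoint and pairwise non-homotopic essential arcs. Using the completability just mentioned, I extend $\sigma$ to an ideal triangulation $\Delta=\{a_0,\ldots,a_m,c_{m+1},\ldots,c_k\}$, so that $\sigma$ is contained in the edge set of $\Delta$. Applying Proposition~\ref{prop:set} to $\Delta$ then gives $f(\Delta)=\{\widetilde{f}(a_0),\ldots,\widetilde{f}(a_m),\widetilde{f}(c_{m+1}),\ldots,\widetilde{f}(c_k)\}$.

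Since $f$ is a simplicial automorphism of $T(S)$, it sends the vertex $\Delta$ to a vertex $f(\Delta)$, which is again an ideal triangulation of $S$; in particular its edges are pairwise disjoint essential arcs. As the arcs $\widetilde{f}(a_0),\ldots,\widetilde{f}(a_m)$ all occur among the edges of this triangulation, they are pairwise disjoint. It remains to verify that they are pairwise distinct, which is the only point that requires a word of justification. Every ideal triangulation of $S$ has the same number of edges, so $f(\Delta)$ has exactly $k+1$ edges, equal to the number of edges of $\Delta$. Hence the surjection $c\mapsto\widetilde{f}(c)$ from the edge set of $\Delta$ onto $f(\Delta)$ is a map between finite sets of the same cardinality, and is therefore a bijection; in particular the images $\widetilde{f}(a_0),\ldots,\widetilde{f}(a_m)$ are pairwise distinct.

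Consequently $\{\widetilde{f}(a_0),\ldots,\widetilde{f}(a_m)\}$ is a collection of pairwise disjoint and pairwise distinct essential arcs, that is, a simplex of $A(S)$, which shows that $\widetilde{f}$ is simplicial. I do not expect any serious obstacle here: the genuine content has already been isolated in Propositions~\ref{prop:well} and~\ref{prop:set}, and this corollary simply combines them with the completability of a partial triangulation to a full one and the constancy of the number of edges across ideal triangulations.
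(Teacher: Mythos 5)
Your proof is correct and follows exactly the argument the paper intends: the corollary is stated without proof as an immediate consequence of Proposition~\ref{prop:set}, namely that any simplex of $A(S)$ completes to an ideal triangulation $\Delta$, whose image $f(\Delta)$ is again a triangulation with edge set $\{\widetilde{f}(c_i)\}$, so the images are pairwise disjoint. Your additional cardinality argument showing that $c\mapsto\widetilde{f}(c)$ is injective on the edges of $\Delta$ is a worthwhile explicit touch (and is what guarantees simplices map to simplices of the same dimension), but it does not change the route.
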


\begin{proposition}\label{prop:homo}
If $f$ and $h$ are two automorphisms of $T(S)$, then
$\widetilde{fh}= \widetilde{f} \widetilde{h}.$
\end{proposition}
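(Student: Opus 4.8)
The plan is to prove the functoriality relation $\widetilde{fh} = \widetilde{f}\,\widetilde{h}$ by showing that both maps agree on every vertex of $A(S)$, that is, on every essential arc $a$. The natural strategy is to unwind the definition of the tilde operation applied to the composite automorphism $fh$ and to compare it with the two-step application $\widetilde{f}(\widetilde{h}(a))$. Since Proposition~\ref{prop:well} guarantees that $\widetilde{f}$, $\widetilde{h}$, and $\widetilde{fh}$ are all well-defined independently of the auxiliary triangulation chosen, I am free to fix one convenient triangulation $\Delta$ in which $a$ is exchangeable and use it throughout the computation; this freedom is exactly what makes the argument go through cleanly.

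First I would fix an essential arc $a$ and a triangulation $\Delta$ in which $a$ is an exchangeable edge, and let $\Delta_a$ denote the triangulation obtained from $\Delta$ by the elementary move on $a$. By the very definition of the tilde operation for the composite automorphism,
\[
\widetilde{fh}(a) = (fh)(\Delta) - (fh)(\Delta_a) = f\bigl(h(\Delta)\bigr) - f\bigl(h(\Delta_a)\bigr).
\]
Next I would compute $\widetilde{f}(\widetilde{h}(a))$ using the same data. By definition $\widetilde{h}(a) = h(\Delta) - h(\Delta_a)$, so $\widetilde{h}(a)$ is precisely the edge lying in the triangulation $h(\Delta)$ but not in $h(\Delta_a)$; in particular $\widetilde{h}(a)$ is an exchangeable edge of $h(\Delta)$, since $h(\Delta)$ and $h(\Delta_a)$ differ by an elementary move on it. I can therefore evaluate $\widetilde{f}$ on the arc $\widetilde{h}(a)$ by choosing the specific triangulation $h(\Delta)$, which realizes $\widetilde{h}(a)$ as an exchangeable edge, with $h(\Delta_a)$ playing the role of the triangulation obtained by the elementary move. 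Applying the definition of $\widetilde{f}$ with this choice yields
\[
\widetilde{f}\bigl(\widetilde{h}(a)\bigr) = f\bigl(h(\Delta)\bigr) - f\bigl(h(\Delta_a)\bigr),
\]
which is identical to the expression above for $\widetilde{fh}(a)$.

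The two displayed expressions coincide, so $\widetilde{fh}(a) = \widetilde{f}(\widetilde{h}(a))$ for every arc $a$, and since both sides are simplicial maps (Corollary~\ref{cor:simplicial}) agreeing on all vertices, the identity $\widetilde{fh} = \widetilde{f}\,\widetilde{h}$ holds. The one point requiring genuine care—and the main obstacle I anticipate—is the step where I evaluate $\widetilde{f}$ on $\widetilde{h}(a)$ using the particular triangulation $h(\Delta)$ rather than an arbitrary one. This is legitimate precisely because Proposition~\ref{prop:well} tells us the value of $\widetilde{f}$ does not depend on the choice of triangulation realizing its argument as an exchangeable edge; without that well-definedness result, the computation would only compute $\widetilde{f}$ relative to one specific choice and could not be compared with the a priori different computation of $\widetilde{fh}$. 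So the crux is to invoke Proposition~\ref{prop:well} explicitly to justify that the clever choice of auxiliary triangulation is permissible, after which the equality is a direct matching of the two subtraction expressions.
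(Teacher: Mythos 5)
Your proof is correct and follows essentially the same route as the paper's: both unwind the definition of $\widetilde{fh}$ at a fixed pair $\Delta$, $\Delta_a$, observe that $\widetilde{h}(a)$ is an exchangeable edge of $h(\Delta)$, and then evaluate $\widetilde{f}$ using the triangulations $h(\Delta)$ and $h(\Delta_a)$. Your explicit appeal to Proposition~\ref{prop:well} to justify that particular choice of auxiliary triangulation is exactly the point the paper's proof relies on implicitly, so nothing is missing.
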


 \begin{proof}
Let $a$ be (the isotopy class of) an arc on $S$. We show that
$\widetilde{fh} (a)= \widetilde{f} (\widetilde{h} (a))$.

Let $\Delta=\{a,c_1,c_2,\ldots,c_k\}$ be a triangulation in which
$a$ is exchangeable and let $\Delta'=\{a',c_1,c_2,\ldots,c_k\}$ be
the triangulation obtained from $\Delta$ by an elementary move on
$a$. Then, $\widetilde{h}(a)=h(\Delta)-h(\Delta')$, and
$\widetilde{fh}(a)=fh(\Delta)-fh(\Delta')$.

Since the map $h:T(S)\to T(S)$ is simplicial and since the
triangulation $\Delta$ is related to $\Delta'$ in $T(S)$ by an
edge, the triangulation $h(\Delta)$ is also related to the
triangulation $h(\Delta')$ by an edge. Since $h(\Delta)$ contains
the arc $\widetilde{h}(a)$ as an exchangeable arc, we can use
$h(\Delta)$ and  $h(\Delta')$ to define
$\widetilde{f}(\widetilde{h}(a))$, and we obtain:

   \begin{eqnarray*}
 \widetilde{f} (\widetilde{h}(a))
&= &f\left(h(\Delta)\right)-f\left(h(\Delta')\right)
  \\&= &(fh)(\Delta)-(fh)(\Delta')
              \\&=&\widetilde{fh}(a).
     \end{eqnarray*}
          This completes the proof.

 \end{proof}

 \begin{proposition} \label{prop:autom}
For any automorphism $f\in \mathrm{Aut}(T(S))$, the associated map
$\widetilde{f}:A(S)\to A(S)$ is an automorphism.
 \end{proposition}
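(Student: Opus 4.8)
The plan is to exploit the functoriality established in Proposition \ref{prop:homo}, together with the simpliciality from Corollary \ref{cor:simplicial}, in order to exhibit an explicit two-sided inverse for $\widetilde{f}$. First I would record the easy but crucial observation that the construction $f\mapsto\widetilde{f}$ sends the identity automorphism of $T(S)$ to the identity of $A(S)$. Indeed, if $f=\mathrm{id}$, then for any arc $a$ and any triangulation $\Delta$ in which $a$ is exchangeable, the definition of $\widetilde{f}$ gives $\widetilde{\mathrm{id}}(a)=\mathrm{id}(\Delta)-\mathrm{id}(\Delta_a)=\Delta-\Delta_a=a$, so $\widetilde{\mathrm{id}}=\mathrm{id}_{A(S)}$ on the vertex set of $A(S)$.

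Next, given $f\in\mathrm{Aut}(T(S))$, I would apply Proposition \ref{prop:homo} to the pair consisting of $f$ and its inverse $f^{-1}$, which is again a simplicial automorphism of $T(S)$. This yields $\widetilde{f}\,\widetilde{f^{-1}}=\widetilde{ff^{-1}}=\widetilde{\mathrm{id}}=\mathrm{id}_{A(S)}$, and symmetrically $\widetilde{f^{-1}}\,\widetilde{f}=\widetilde{f^{-1}f}=\widetilde{\mathrm{id}}=\mathrm{id}_{A(S)}$. Hence $\widetilde{f}$ is a bijection of the vertex set of $A(S)$ whose inverse is precisely $\widetilde{f^{-1}}$.

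Finally, Corollary \ref{cor:simplicial} guarantees that both $\widetilde{f}$ and its inverse $\widetilde{f^{-1}}$ are simplicial. A simplicial map that is bijective on vertices and whose inverse is also simplicial is, by definition, a simplicial automorphism; therefore $\widetilde{f}\in\mathrm{Aut}(A(S))$, as claimed.

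I do not expect any serious obstacle in this argument, since the substantive work has already been carried out in showing that $\widetilde{f}$ is well-defined (Proposition \ref{prop:well}), simplicial (Corollary \ref{cor:simplicial}), and functorial (Proposition \ref{prop:homo}). The only point requiring care is the identity $\widetilde{\mathrm{id}}=\mathrm{id}_{A(S)}$, but this follows at once from unwinding the definition of $\widetilde{f}$; once it is available, the invertibility of $\widetilde{f}$ is a purely formal consequence of the composition law.
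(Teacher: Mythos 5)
Your proposal is correct and follows essentially the same route as the paper: both invoke Proposition \ref{prop:homo} with $h=f^{-1}$ to get $\widetilde{f}\,\widetilde{f^{-1}}=\widetilde{\mathrm{I}}=\widetilde{f^{-1}}\,\widetilde{f}$, observe that $\widetilde{\mathrm{I}}$ is the identity of $A(S)$, and conclude via Corollary \ref{cor:simplicial} that $\widetilde{f}$ is a simplicial bijection, hence an automorphism. Your explicit unwinding of $\widetilde{\mathrm{id}}(a)=\Delta-\Delta_a=a$ merely makes precise what the paper dismisses with ``from the definitions.''
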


\begin{proof}
Let $h=f^{-1}\in \mathrm{Aut}(T(S))$ be the inverse of $f$. By
Proposition \ref{prop:homo}, we have $\widetilde{f}\widetilde{h}=
\widetilde{\mathrm{I}}=\widetilde{h} \widetilde{f}$, where
$\mathrm{I}$ is the identity map of $T(S)$. Now from the
definitions, the map $\widetilde{\mathrm{I}}: A(S)\to A(S)$
associated to $\mathrm{I}$ is the identity map of $A(S)$. Thus,
$\widetilde{f}$ has an inverse, which shows that $\widetilde{f}:
A(S)\to A(S)$ is a bijection.

Since $\widetilde{f}$ is also simplicial by
Corollary~\ref{cor:simplicial}, it is an automorphism of $A(S)$.
 \end{proof}

Finally, we prove the following theorem which, together with
Theorem \ref{th:automorphism-arc}, implies Theorem
\ref{th:automorphism}.

 \begin{theorem}
Suppose the surface $S$ is not a sphere with at most three
punctures or a torus with one puncture. Then the map $\phi:
\mathrm{Aut}(T(S))\to  \mathrm{Aut}(A(S))$ defined by $f \mapsto
\widetilde{f}$ is an isomorphism.
  \end{theorem}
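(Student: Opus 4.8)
The plan is to show that $\phi$ is a group isomorphism by verifying, in turn, that it is a well-defined homomorphism, that it is injective, and that it is surjective. Most of the first point is already in hand: Proposition \ref{prop:autom} shows that $\widetilde{f}\in\mathrm{Aut}(A(S))$ for each $f\in\mathrm{Aut}(T(S))$, so $\phi$ does land in $\mathrm{Aut}(A(S))$, and Proposition \ref{prop:homo} gives $\widetilde{fh}=\widetilde{f}\,\widetilde{h}$, which is exactly the statement that $\phi$ is a homomorphism. So the real content is injectivity and surjectivity.

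For injectivity I would compute the kernel. Suppose $\phi(f)=\widetilde{f}$ is the identity of $A(S)$, i.e. $\widetilde{f}(c)=c$ for every arc $c$. Applying the naturality formula of Proposition \ref{prop:set} to an arbitrary triangulation $\Delta=\{c_0,\dots,c_k\}$ gives $f(\Delta)=\{\widetilde{f}(c_0),\dots,\widetilde{f}(c_k)\}=\{c_0,\dots,c_k\}=\Delta$. Thus $f$ fixes every vertex of $T(S)$, and a simplicial automorphism of a graph that fixes all of its vertices is the identity. Hence $\ker\phi$ is trivial and $\phi$ is injective.

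For surjectivity I would construct an explicit preimage. Given $g\in\mathrm{Aut}(A(S))$, I define a map $f$ on the vertices of $T(S)$ by $f(\{c_0,\dots,c_k\})=\{g(c_0),\dots,g(c_k)\}$. Since every triangulation is exactly a maximal simplex of $A(S)$ and a simplicial automorphism preserves dimension, $g$ carries maximal simplices to maximal simplices, so $f(\Delta)$ is again a triangulation; and since $g$ is bijective on arcs, $f$ is a bijection on vertices with inverse induced by $g^{-1}$. The point requiring care is that $f$ is \emph{simplicial}, i.e. that it sends edges of $T(S)$ to edges. Here I would use the purely combinatorial characterization of adjacency in $T(S)$: two distinct triangulations are joined by an edge precisely when they share all but one of their edges, that is, when, as maximal simplices of $A(S)$, they share a codimension-one face. (That two triangulations differing in a single edge really differ by an elementary move uses that a codimension-one simplex obtained by deleting an exchangeable edge extends to a triangulation in exactly two ways, whereas deleting a non-exchangeable edge admits only one completion.) Since $g$ preserves the property of sharing a codimension-one face, $f$ preserves adjacency; applying the same reasoning to $g^{-1}$ shows $f\in\mathrm{Aut}(T(S))$.

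Finally I would check $\widetilde{f}=g$ to conclude $\phi(f)=g$. Unwinding the definition of $\widetilde{f}$: for an arc $a$, pick $\Delta=\{a,c_1,\dots,c_k\}$ with $a$ exchangeable and let $\Delta_a=\{a',c_1,\dots,c_k\}$ be its image under an elementary move on $a$. Then $f(\Delta)=\{g(a),g(c_1),\dots,g(c_k)\}$ and $f(\Delta_a)=\{g(a'),g(c_1),\dots,g(c_k)\}$, so the edge of $f(\Delta)$ not lying in $f(\Delta_a)$ is $g(a)$, giving $\widetilde{f}(a)=g(a)$. The main obstacle in the whole argument is the simplicial verification in the surjectivity step, namely establishing that adjacency in $T(S)$ is detected internally by $A(S)$ through shared codimension-one faces, including the careful treatment of exchangeable versus non-exchangeable edges.
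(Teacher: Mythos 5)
Your proposal is correct, and its injectivity argument (kernel computation via Proposition \ref{prop:set}) is exactly the paper's. Where you genuinely diverge is surjectivity: the paper simply invokes the Irmak--McCarthy theorem (Theorem \ref{th:automorphism-arc}) to realize a given $h\in\mathrm{Aut}(A(S))$ by a homeomorphism $H$ of $S$, takes $f$ to be the induced automorphism of $T(S)$, and notes $\widetilde{f}=h$; you instead build the preimage purely combinatorially, observing that the vertices of $T(S)$ are precisely the maximal simplices of $A(S)$, that adjacency in $T(S)$ is equivalent to sharing a codimension-one face (which is immediate from the paper's definition of an elementary move as removing one edge and replacing it by a distinct one --- your parenthetical about exchangeable versus non-exchangeable edges is a correct but not strictly needed refinement), and that any automorphism of $A(S)$ preserves both maximality and codimension-one face sharing. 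Your final verification $\widetilde{f}(a)=g(a)$ is also sound, since $g$ injective forces $g(a)\notin f(\Delta_a)$. What each route buys: the paper's argument is two lines given that Theorem \ref{th:automorphism-arc} is quoted anyway (and it is needed regardless to deduce Theorem \ref{th:automorphism} from the present statement), whereas your construction is more elementary and more informative --- it proves $\mathrm{Aut}(T(S))\cong\mathrm{Aut}(A(S))$ without any geometric input at this step, with the hypotheses on $S$ entering only through the well-definedness of $\phi$ (the pentagon argument in Proposition \ref{prop:well}), so the deep topological content of the main theorem is cleanly isolated in the arc-complex result.
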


 \begin{proof}
Given the propositions that we already proved, it remains to show
is that $\phi$ is bijective.

We first show that $\phi$ is onto. Let $h$ be an arbitrary element
in  $\mathrm{Aut}(A(S))$. By the result of Irmak McCarthy
mentioned above (Theorem \ref{th:automorphism-arc}), $h$ is
induced by a homeomorphism $H:S\to S$, which induces an
automorphism $f$ of $T(S)$, and this automorphism satisfies
$\widetilde{f}=h$. Thus, $\phi$ is onto.

Now we show that $f$ is one-to-one. Let $f$ be an element of
$\mathrm{Aut}(T(S))$ such that $\widetilde{f}$ is the identity
automorphism of $A(S)$. Then, by Proposition  \ref{prop:set}, $f$
acts trivially on $T(S)$. Thus, $f$ is one-to-one. This completes
the proof of the theorem.
 \end{proof}

\end{document}